\documentclass[12pt,a4paper, reqno]{amsart}
\usepackage{amsfonts,amsmath,amssymb}
\usepackage{hyperref}
\usepackage{latexsym,fullpage,amsfonts,amssymb,amsmath,amscd,graphics,epic,enumerate}
\usepackage[all]{xy}
\usepackage{amssymb,amsthm,amsxtra,comment}
\usepackage{color}
\usepackage[dvipsnames]{xcolor}
\usepackage{amscd}
\usepackage{amsthm}
\usepackage{amsfonts,cancel}
\usepackage{amssymb}
\usepackage{url}
\usepackage{bbm}
\usepackage{wasysym}
\usepackage{MnSymbol}
\usepackage{kbordermatrix}
\usepackage{tikz}

\usepgflibrary{shapes.geometric}

\tikzstyle{V}=[draw, fill =black, circle, inner sep=0pt, minimum size=1.5pt]
\tikzstyle{C}=[draw, fill =white, circle, inner sep=0pt, minimum size=1.5pt]
\tikzstyle{over}=[draw=white,double=black,line width=2pt, double distance=.5pt]

\theoremstyle{plain}
\newtheorem*{theorem*}{Theorem}
\newtheorem*{remark*}{Remark}
\newtheorem*{example*}{Example}
\newtheorem{lemma}{Lemma}[subsection]
\newtheorem{proposition}[lemma]{Proposition}
\newtheorem{corollary}[lemma]{Corollary}
\newtheorem{theorem}[lemma]{Theorem}

\newtheorem*{conjecture*}{Conjecture}

\theoremstyle{definition}

\theoremstyle{remark}

\oddsidemargin=0cm
\evensidemargin=0cm

\baselineskip 20pt \textwidth 16cm \sloppy
 \newcommand{\op}{\operatorname}

\newcommand{\Hom}{\operatorname{Hom}}

\newcommand{\ch}{\operatorname{ch}}
\newcommand{\sch}{\operatorname{sch}}

\newcommand{\Ind}{\operatorname{Ind}}
\newcommand{\Coind}{\operatorname{Coind}}

\newcommand{\sdim}{\operatorname{sdim}}

\newcommand{\Span}{\operatorname{Span}}

\renewcommand{\Im}{\operatorname{Im}}

\newcommand{\sgn}{\operatorname{sgn}}

\renewcommand{\gg}{\mathfrak{g}}

\renewcommand{\dim}{\mathrm{dim}}

\newcommand{\finite}{\mathrm{finite}}
\newcommand{\Modulo}[1]{\ (\mathrm{mod}\ #1)}
\newcommand{\End}{\mathrm{End}}

\newcommand{\black}{\color{black}}

\newif\ifpaper
\papertrue

 \def\<{\langle}
  \def\>{\rangle}

\DeclareMathOperator{\tr}{tr}

\def\quotient#1#2{%
    \raise1ex\hbox{$#1$}\Big/\lower1ex\hbox{$#2$}%
}

\allowdisplaybreaks 
\begin{document}

\title{The Grothendieck ring of the periplectic Lie supergroup and supersymmetric functions}

\author{Mee Seong Im 
\and Shifra Reif
\and Vera Serganova} 
\address{Department of Mathematical Sciences, United States Military Academy, West Point, NY 10996 USA}
\email{meeseongim@gmail.com}
\address{Department of Mathematics, Bar-Ilan University, Ramat Gan, Israel}
\email{shifra.reif@biu.ac.il} 
\address{Department of Mathematics, University of California at Berkeley, Berkeley, CA 94720 USA}
\email{serganov@math.berkeley.edu} 
\date{\today}

\thanks{This project is partially supported by ISF Grant No. $1221/17$,  NSF grant $1701532$, 
and United States Military Academy's Faculty Research Fund. }

\begin{abstract}
We show that the Grothendieck ring of finite-dimensional representations of the periplectic Lie supergroup $P(n)$ is isomorphic to the ring of symmetric polynomials in $x_1^{\pm 1}, \ldots, x_n^{\pm 1}$  whose evaluation $x_1=x_2^{-1}=t$ is independent of $t$.  \\ 
 
\noindent  
 \textsc{Abstract.\hspace{-1.5mm}} 
Nous d\'emontrons que l'anneau de Grothendieck des repr\'esentations de dimension finie du super groupe de Lie p\'eriplectique $P(n)$ est isomorphe \`a l'anneau des polyn\^{o}mes sym\'etriques en $x_1^{\pm 1}, \ldots, x_n^{\pm 1}$ pour lesquels l'\'evaluation $x_1=x_2^{-1}=t$ est ind\'ependante de $t$.  
\end{abstract}

\keywords{Duflo--Serganova functor, Grothendieck ring, 
periplectic Lie superalgebra, supercharacters,  
thin Kac modules, translation functors,  
parabolic induction}
\maketitle
\bibliographystyle{amsalpha}  
\setcounter{tocdepth}{3}
\section{Introduction}\label{sec:intro} 
Supersymmetric polynomials are polynomials in $x_1,\ldots, x_m,y_1,\ldots,y_n$ which are doubly symmetric and satisfy the additional property that the evaluation $x_1=y_1=t$ is independent of $t$. 
Many properties of symmetric polynomials generalize to the supersymmetric case: 
It follows from \cite{STEMBRIDGE1985439} that the ring of supersymmetric polynomials is spanned by the supercharacters of $\mathfrak{gl}(m|n)$-modules which lie in the tensor algebra $T(V)$ of the natural representation $V$.  The super-Schur polynomials were shown in \cite{BR,Sergeev} to be supercharacters of simple $\mathfrak{gl}(m|n)$-modules that can be realized using Young tableaux lying in the $(m|n)$-hook.

For semisimple Lie algebras, the ring of characters is isomorphic to the Grothendieck ring, that is, the free abelian group on the objects of the category, modulo the relation
 $[B] = [A]+[C]$ 
 for every exact sequence 
 $0\rightarrow A \rightarrow B \rightarrow C  \rightarrow 0$. 
 \black
The ring structure is given by tensor products in the category. 
For the general linear group,  the Grothendieck ring 
$K[GL(n)]$ 
of the category of finite-dimensional representations   is isomorphic to the ring of symmetric Laurent polynomials
\[
K[GL(n)]\cong \mathbb Z\left[x_1^{\pm 1}, \ldots, x_n^{\pm 1} \right]^{S_n}
\]
(see for example, \cite[Sec. 23--24]{MR1153249}).
Moreover, 
Schur polynomials are images of irreducible representations under this isomorphism.

This description generalizes to all semisimple complex Lie algebras. 
In this case, 
the category admits complete reducibility 
and 
the characters of irreducible representations 
are given explicitly by 
the Weyl character formula.   
The  Grothendieck ring  is then isomorphic to the ring   
$\mathbb{Z}[P]^W$   
of   
$W$-invariants in the integral group ring 
$\mathbb{Z}[P]$, 
where 
$P$ 
is the corresponding weight lattice and 
$W$ 
is the Weyl group.
The isomorphism is given by the character map.

 For modules over Lie superalgebras, one has a parity shift functor  
 $\Pi$   
 which does not change the action of the Lie superalgebra.  
 Hence, 
 it is natural to consider one of the two quotients of the ring, 
 either by the relation  
 $[M] = [\Pi M] $  
 or  
 $[M] = -[\Pi M] $.  
 These quotients are isomorphic to the  ring of characters  and the  ring of supercharacters,   
 respectively.

Describing the ring of characters is harder in the super case. For the general linear Lie superalgebra, not all finite-dimensional modules lie in the tensor algebra $T(V)$. In general, the category of finite-dimensional modules over a simple Lie superalgebra is more subtle: 
it is not semisimple,   
characters (and supercharacters) of simple modules are not realized using Young tableaux 
and are hard to compute.    
Nevertheless, it was shown by  Sergeev  and  Veselov  in 
\cite{MR2776360},  
that for basic classical Lie superalgebras,  
the ring of supercharacter can be realized using supersymmetric functions.

 \black
 The theorem of  
 A.N. Sergeev and A.P. Veselov  
 states that the ring of supercharacters is equal to the subring of 
 $\mathbb{Z}[P]^W$, 
 admitting an extra condition which corresponds to the  isotropic roots  
 of the Lie superalgebra.  
In the case of the  general linear Lie supergroup 
$GL(m|n)$,  
the  ring of supercharacters  is isomorphic to the  ring of 
supersymmetric Laurent polynomials, 
namely, 
functions   
$f$  
in 
$\mathbb Z 
\left[ 
x_1^{\pm 1}, \ldots, x_m^{\pm 1}, y_1^{\pm 1}, \ldots, y_n^{\pm 1} 
\right]^{S_m\times S_n}$  for which  
$ f|_{x_1 = y_1 = t} $  
 is independent of 
 $t$.

 \black
The periplectic Lie superalgebra 
$\mathfrak p(n)$   
imposes further difficulties than  basic classical Lie superalgebras  
due to  the lack of 
an invariant bilinear form.    
It was only recently that  
its representations were understood   
and   
translation functors were computed in  
\cite{BDEHHILNSS}  
and  
\cite{BDEHHILNSS2}.

In this paper, 
we describe the  ring of supercharacters  of the  periplectic Lie supergroup 
$P(n)$. 
We show that  it is isomorphic to  the ring of supersymmetric functions  
with a suitable  supersymmetry condition,   
that is   
\begin{theorem}
\label{thm:reduced-GR-periplectic}
The ring of supercharacters of 
$P(n)$ 
is isomorphic to 
\[ 
J_n := 
 \{ f \in \mathbb{Z}[ x_1^{\pm 1}, \ldots, x_n^{\pm 1} ]^{S_n}: 
 f|_{x_1 = x_2^{-1} = t} \mbox{ is independent of } t  \}.
\] 
\end{theorem}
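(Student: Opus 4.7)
The plan has three parts: show containment in $J_n$, show surjectivity onto $J_n$, and observe injectivity of the supercharacter map.

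\textbf{Containment.} Since $P(n)$ contains $GL(n)$ as its even subgroup, any finite-dimensional $P(n)$-module restricts to a rational $GL(n)$-module, whence $\sch(M) \in \mathbb{Z}[x_1^{\pm 1}, \ldots, x_n^{\pm 1}]^{S_n}$. The supersymmetry condition ``$\sch(M)|_{x_1 = x_2^{-1} = t}$ independent of $t$'' is best proven via the Duflo--Serganova construction. Pick an odd root vector $x \in \mathfrak{p}(n)$ of weight $\varepsilon_1 + \varepsilon_2$; since $\mathfrak{p}(n)_{+1}$ is abelian, $x^2 = 0$, and the functor $\mathrm{DS}_x(M) := \ker(x)/\operatorname{im}(x)$ descends to a ring map on Grothendieck rings valued in the Grothendieck ring of the reduced Lie superalgebra $\mathfrak{p}(n)_x$, whose Cartan is precisely the kernel of $\varepsilon_1+\varepsilon_2$. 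At the level of supercharacters $\mathrm{DS}_x$ is the substitution $x_1 x_2 = 1$, so its well-definedness on isomorphism classes forces $\sch(M)|_{x_1 = x_2^{-1} = t}$ to be independent of $t$; $S_n$-symmetry then yields the defining condition of $J_n$.

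\textbf{Surjectivity.} I would construct an explicit $\mathbb{Z}$-basis of $J_n$ (for instance a ``$J_n$-Schur'' family indexed by partitions) and realize each basis element via supercharacters of thin Kac modules induced from the parabolic $\mathfrak{p}(n)_{\geq 0} := \mathfrak{p}(n)_0 \oplus \mathfrak{p}(n)_{+1}$. The supercharacter of a thin Kac module $K(\lambda)$ has a closed product form whose Weyl-denominator factor automatically enforces the $J_n$ condition. To obtain $\mathbb{Z}$-surjectivity, and not merely rational surjectivity, I would combine the thin Kac family with the action of translation functors, using the explicit translation functor calculations already available for $\mathfrak{p}(n)$ in the cited works to pin down the precise integral lattice.

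\textbf{Injectivity and expected obstacle.} Injectivity follows from the fact that distinct classes of simple $P(n)$-modules (modulo parity) have linearly independent supercharacters. The chief obstacle is the surjectivity step: identifying the image with $J_n$ on the nose, over $\mathbb{Z}$ rather than over $\mathbb{Q}$. Making this rigorous requires either a change-of-basis matrix between thin Kac supercharacters and the combinatorial $J_n$-basis that is unitriangular over $\mathbb{Z}$, or an inductive argument on $n$ using $\mathrm{DS}_x$ to reduce to smaller periplectic groups; the subtle atypicality structure of $P(n)$-blocks is what I expect to make this step most delicate.
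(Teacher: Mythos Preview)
Your containment argument (via $DS_x$/restriction) and the injectivity observation are fine and match the paper. The genuine gap is in surjectivity.

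Your primary plan---realize a $\mathbb{Z}$-basis of $J_n$ by thin Kac supercharacters, aided by translation functors---cannot succeed for $n\ge 2$. By Lemma~\ref{lem:superchar-thin-Kac} every $\sch\nabla(\lambda)$ is divisible by $\mathcal{R}_{-1}=\prod_{i<j}(1-x_ix_j)$ and hence vanishes under the substitution $x_{n-1}=x_n^{-1}$; Proposition~\ref{lemma:kernel-DS-Pn} shows that the $\mathbb{Z}$-span of thin Kac supercharacters is \emph{exactly} $\ker ds_n$, a proper subgroup of $J_n$ (it misses, for instance, the constant $1=\sch L(0)$). Translation functors do not let you escape: by Proposition~\ref{smallKac}, each $\Theta_k'$ sends $[\nabla(\lambda)]$ to a $\mathbb{Z}$-combination of classes $[\nabla(\mu)]$, so this span is $\Theta_k'$-stable. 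Consequently no unitriangular change of basis between thin Kac classes and a basis of $J_n$ can exist; the ``Weyl-denominator factor'' you note as enforcing the $J_n$ condition in fact forces something strictly stronger, namely membership in $\ker ds_n$.

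The alternative you mention in passing---induction on $n$ via $DS_x$---is the route the paper actually takes, but the hard content is precisely what you leave unspecified. Assuming $J(P(n-2))=J_{n-2}$, any $f\in J_n$ differs from something in the image of $ds_n|_{J(P(n))}$ by an element of $\ker ds_n\subset J(P(n))$, so everything reduces to showing that $ds_n:J(P(n))\to J(P(n-2))$ is \emph{surjective} (Theorem~\ref{thm:ds-map-surj}). Since thin Kac classes die under $ds_n$, one needs a genuinely different source of preimages; the paper constructs them as Euler characteristics $\mathcal{E}(\lambda)$ of line bundles on $G/Q$ for a suitably chosen non-maximal parabolic $Q$ (Section~\ref{parabolic induction}, Proposition~\ref{nabla(0) is in the image}), producing an explicit element of $J(P(n))$ with $ds_n^{(k)}\mathcal{E}(\lambda)=[\nabla_{P(n-2k)}(0)]$. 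Translation functors then transport this to all $[\nabla(\mu)]$ (Proposition~\ref{prop:all-thin-Kac-image}), and a filtration argument through the tower $\ker ds_n^{(k)}$ finishes (Proposition~\ref{prop:surjectivity}). This parabolic/cohomological construction of preimages is the missing ingredient in your outline.
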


 The inclusion from left to right for 
 Theorem~\ref{thm:reduced-GR-periplectic} 
 is obtained by restriction to  rank-one  subalgebras  as done in 
 \cite[Prop.~4.3]{MR2776360}.
The other inclusion is much more involved.  
A key tool is the ring homomorphism   
$ds_n: J(P(n)) \rightarrow J(P(n-2))$ 
induced from the Duflo--Serganova functor. 
We use the realization of  
$ds_n$ 
as the evaluation map 
$f \mapsto f|_{x_n = x_{n-1}^{-1} = t}$, 
proven in  \cite{hoyt2016grothendieck} 
as well as the description of its kernel. 
The main step is to prove that 
$ds_n$   
is surjective  in order to apply  an  inductive argument. 
We construct  preimages of 
$ds_n$   
using Euler characteristics of  parabolic inductions   given in   
\cite{MR2734963}  
and  translation functors  given in 
\cite{BDEHHILNSS}.

\section{The periplectic Lie superalgebra and its representations}
\label{section:periplectic-salg}

\subsection{Lie superalgebras}

Given a $\mathbb{Z}_2$-graded vector superspace 
$V  =  V_{\overline 0}\oplus V_{\overline 1}$, 
the parity of a homogeneous (even) vector 
$v\in V_{\overline 0}$ 
is defined as 
$\overline v 
= \overline 0 \in \mathbb{Z}_2 
= \{ \overline 0 , \overline 1 \}$  
while the parity of an odd vector 
$v\in V_{\overline 1}$ 
is defined as 
$\overline v = \overline 1$. 
If the parity of a vector 
$v$ 
is 
$\overline 0$ 
or 
$\overline 1$, 
we say that 
$v$ has degree $0$ 
or 
$1$, 
respectively.  
We always assume that 
$v$  
is homogeneous whenever the notation  
$\overline v$   
appears in expressions. 
By 
$\Pi$ 
we denote the switch of parity functor.

The Lie superalgebra 
$\mathfrak{g} = \mathfrak{gl}(n|n)$ 
is defined to be the endomorphism algebra 
$\End( V_{\overline{0}} \oplus V_{\overline 1} )$, 
where 
$\dim V_{\overline 0} = \dim V_{\overline 1} = n$. 
Then 
$\mathfrak{g} 
= \mathfrak{g}_{\overline 0} \oplus \mathfrak{g}_{\overline 1}$,  
where  
\[ 
\mathfrak{g}_{\overline 0}  =  \End(V_{\overline 0})  \oplus  \End( V_{\overline 1} ) 
\qquad 
\mbox{ and } 
\qquad 
\mathfrak{g}_{\overline 1} 
= \Hom( V_{\overline 0}, V_{\overline 1} ) 
  \oplus 
  \Hom( V_{\overline 1}, V_{\overline 0} ). 
\] 
Let 
$[x,y] = xy - (-1)^{\overline x \overline y} yx $, 
where  $x$  and  $y$ 
are homogeneous elements of 
$\mathfrak{g}$, 
and extend 
$[\:\:\:,\:\:\:]$ 
linearly to all of 
$\mathfrak{g}$. 
By fixing a basis of 
$V_{\overline 0}$ 
and 
$V_{\overline 1}$, 
the superalgebra 
$\mathfrak{g}$ 
can be realized as the set of 
$2n\times 2n$  matrices, 
where  
\[ 
\mathfrak{g}_{\overline 0} = 
\left\{  
\begin{pmatrix}
A & 0 \\ 
0 & D \\ 
\end{pmatrix} : 
A, D\in M_{n,n}(\mathbb{C})
\right\}
\mbox{ and } 
\mathfrak{g}_{\overline 1} 
 = 
\left\{  
\begin{pmatrix} 
0 & B \\ 
C & 0 \\ 
\end{pmatrix} : 
B,C \in M_{n,n}(\mathbb{C})
\right\}, 
\] 
and  $M_{n,n}(\mathbb{C})$ 
are 
$n\times n$ 
complex matrices. 
Recall that the supertrace is defined by 
\[ 
\mathfrak{str} \begin{pmatrix} 
A & B \\ 
C & D   
\end{pmatrix}  
=   
\tr(A) - \tr(D). 
\] 

\subsubsection{Periplectic Lie superalgebra}

Let 
$V$ 
be an 
$(n|n)$-dimensional  vector superspace  equipped with a nondegenerate odd symmetric form
\begin{equation}
\label{eqn:bilinear-form-Pn}
\beta: V\otimes V \to \mathbb C, \quad  
\beta(v,w) = \beta(w,v), \quad 
\text{and} \quad  
\beta(v,w) = 0 \quad   
\text{if} \quad   
\overline {v} = \overline{w}.  
\end{equation}  
Then 
$\op{End}_{\mathbb C}(V)$ 
inherits the structure of a vector superspace from 
$V$.  
Let  
$\mathfrak{p}(n)$  
be the Lie superalgebra of all  
$X \in \operatorname{End}_{\mathbb C}(V)$ 
preserving  
$\beta$, i.e., 
$\beta$ 
satisfies the condition  
$$ 
\beta(Xv, w) + (-1)^{\overline{X} \overline{v}} \beta(v, Xw) = 0.
$$

With respect to a fixed bases for 
$V$, 
the matrix of 
$X \in  \mathfrak{p}(n)$ 
has the form 
$\left( 
\begin{smallmatrix}
A & B \\
C & -A^t
\end{smallmatrix} 
\right)$,  
where $A, B, C$  are 
$n\times n$ 
matrices such that  
$B$  
is symmetric and  
$C$  
is antisymmetric.

For the remainder of this manuscript, 
we will write 
$\mathfrak{g} :=  \mathfrak{p}(n)$.
  Note that 
  $\mathfrak{str}:\mathfrak{g} \rightarrow \mathbb{C}$  
  is a one-dimensional representation of  
  $\mathfrak{g}$. 
  We will also use the  
  $\mathbb Z$-grading  
  $\mathfrak{g}  
     = \mathfrak{g}_{-1} \oplus \mathfrak{g}_0 \oplus \mathfrak{g}_1$   
  where 
  $\mathfrak{g}_0  
     = \mathfrak{g}_{\overline 0} \simeq \mathfrak{gl}(n)$,  
  $\mathfrak{g}_{\pm 1} $  
  is the annihilator of  
  $V_{\overline 0}$  
  (respectively, $V_{\overline 1}$). 
  By $G$  
  we denote the algebraic supergroup  
  $P(n)$.

\subsection{Root systems}
\label{subsection:root-systems}
For the periplectic Lie superalgebra 
$\mathfrak{g}$, 
fix the standard Cartan subalgebra 
$\mathfrak{h}$ 
of diagonal matrices in 
$\mathfrak{g}_0$ 
with its standard dual basis 
$\{ \varepsilon_1, \ldots,  \varepsilon_n \}$.  
So we have a root space decomposition 
$\mathfrak{g} 
= \mathfrak{h}  \oplus  \left(  \bigoplus_{\alpha\in \Delta}  \mathfrak{g}_{\alpha} \right)$, 
where  
$\Delta = 
\Delta( \mathfrak{g}_{-1} ) \cup  
\Delta( \mathfrak{g}_0 ) \cup  
\Delta( \mathfrak{g}_1 )$,  
and   
\begin{align*} 
\Delta(\mathfrak{g}_0)  
    &= 
\{ 
\varepsilon_i - \varepsilon_j: 1\leq i \not= j \leq n
\},   \\
\Delta(\mathfrak{g}_1) 
     = 
\{ 
\varepsilon_i + \varepsilon_j : 1\leq i\le j \leq n&  
\}, \quad 
\mbox{ and }
\quad 
\Delta(\mathfrak{g}_{-1}) 
     =  
\{  
-(\varepsilon_i + \varepsilon_j) : 1 \leq i < j \leq n 
\}.  
\end{align*}

The set of simple roots is chosen to be 
\[
\Pi 
   = 
    \{ -2\varepsilon_1, \varepsilon_1-\varepsilon_2, \ldots, \varepsilon_{n-1}-\varepsilon_n \}. 
\]  
This implies that 
$
\Delta^+( \mathfrak{g}_{0} ) 
= 
\{ \varepsilon_i-\varepsilon_j : 1 \leq  i < j \leq n \}$. 
Our Borel subalgebra is then 
$\mathfrak b_0  \oplus  \mathfrak g_{-1}$, 
where 
$\mathfrak b_0 
= \bigoplus_{ \alpha\in\Delta^+( \mathfrak g_0 ) }   \mathfrak g_\alpha$ 
and 
$\mathfrak g_{-1} 
=  \bigoplus_{\alpha\in \Delta( \mathfrak g_{-1} )}   \mathfrak g_\alpha$.

Let 
\[ 
\mathcal{R}_{0}^{} 
=  \prod_{\alpha\in\Delta^+(\mathfrak{g}_{0})} ( 1-e^{-\alpha} ), 
\quad 
\mbox{ and }
\quad 
\mathcal{R}_{-1}^{} 
=  \prod_{\alpha\in\Delta(\mathfrak{g}_{-1})} ( 1-e^{-\alpha} ). 
\] 
For 
$W = S_n$, 
the Weyl group of the even subalgebra of 
$\mathfrak p(n)$, 
$\mathcal{R}_{-1}^{}$ 
is 
$W_{}$-invariant 
and 
$e^{\rho} \mathcal{R}_{0}^{}$ 
is 
$W$-anti-invariant.

\subsection{Weight spaces}
\label{subsection:weight-spaces }
 Let $\mathcal C_n$ 
 be the  category  of  finite-dimensional representation of 
 $\mathfrak {g}$ 
 and 
 $\mathcal F_n$ 
 be the category of finite-dimensional
  representation of 
  $G$. 
  Both are abelian symmetric rigid tensor categories. 
  The latter category is equivalent to the category of finite-dimensional 
  $\mathfrak{g}$-modules, 
  integrable over the underlying algebraic group 
  $G_0 = GL(n)$,  
  see \cite{S1}.  

The Cartan subalgebra 
$\mathfrak{h}$ 
is abelian, 
so it acts locally-finitely on a finite-dimensional 
$\frak g$-module 
$M$. 
This yields a decomposition of 
$M$ 
as a direct sum of generalized weight spaces 
$M  =  \oplus_{\lambda\in\mathfrak{h}^*}M_{\lambda}$ where 
$M_{\lambda}  =  \{ v\in M: (h-\lambda(h))^{m} v \mbox{ for all } h \in \mathfrak{h}\} \not= \{ 0 \} $ 
for some sufficiently large 
$m$.   
If 
$M$ 
is a $G$-module,  
it is semisimple over  
$\mathfrak{g}_0$  
and hence   
$\mathfrak{h}$  
acts diagonally on  
$M$.

Suppose that 
$M  =  \bigoplus_{ \mu \in \mathfrak h ^* } M_\mu $ 
is weight space decomposition of a 
$\mathfrak g$-module $M$. 
Define the character of $M$ as
\[
\ch(M) 
     :=  \sum_{\mu \in \mathfrak h^*}  \dim ( M_{\mu} ) e^{\mu}, 
\] 
while the supercharacter is defined as   
\[ 
\sch(M)  
     :=  \sum_{\mu \in \mathfrak h^*}  \sdim ( M_{\mu} ) e^{\mu}. 
\]

Weights of modules in the abelian category 
$\mathcal{F}_n$ 
of finite-dimensional representations of the periplectic Lie supergroup  
$P(n)$  
are denoted as  
\[
\lambda 
= (\lambda_1, \ldots, \lambda_n)  
= \sum_{1\leq i\leq n} \lambda_i \varepsilon_i, \qquad \lambda_i \in \mathbb{Z}.  
\]  
Define the parity of 
$\lambda$ 
as 
$p(\lambda) = \frac{1}{2}\sum_{1\leq i\leq n} \lambda_i\Modulo 2$  
if 
$\sum_{1\leq i\leq n} \lambda_i$  
is even and  
$p(\lambda) = \frac{1}{2}(\sum_{1\leq i\leq n} \lambda_i+1)\Modulo 2$  
if   
$\sum_{1\leq i\leq n} \lambda_i$ 
is odd. 
Note that the standard ordering  of the weights  for our choice of positive roots  is 
$ \varepsilon_i > \varepsilon_j $ when $i< j$ 
and 
$ \varepsilon_i < 0 $ 
for all $i$.

A weight  
$\lambda$  
is dominant if and only if 
$\lambda_1  \geq  \lambda_2  \geq  \ldots  \geq  \lambda_n$. 
We will denote 
$\Lambda_n$  
as the set of  dominant integral weights. 
Simple objects  in 
$\mathcal{F}_n$ 
(up to isomorphism and parity-switch) 
are parametrized by 
$\Lambda_n$. 
Denote by 
$L(\lambda)$  
the simple module with highest weight  
$\lambda$  
with respect to the Borel subalgebra  
$\mathfrak{b}_0\oplus \mathfrak{g}_{-1}$,   
where the parity is taken  such that  
the parity of the highest weight vector  is  
$p(\lambda)$.

\subsection{Thin Kac modules}  
\label{subsection:thin-Kac} 
Let 
$V(\lambda)$ 
be a simple 
$\mathfrak{g}_0$-module with highest weight 
$\lambda$ 
with respect to the fixed Borel 
$\mathfrak{b}_0$ of 
$\mathfrak{g}_0$. 
Given a dominant integral weight 
$\lambda$, 
the thin Kac module corresponding to 
$\lambda$ 
is   
\[
\nabla(\lambda) 
= \prod\!{}^{n(n-1)/2} 
\Ind_{\mathfrak{g}_0 \oplus \mathfrak{g}_1}^{\mathfrak{g}} V(\lambda - \gamma) 
\simeq 
\Coind_{\mathfrak{g}_0 \oplus \mathfrak{g}_1}^{\mathfrak{g}}  V(\lambda),  
\] 
where 
$\gamma 
= \sum_{\alpha\in\Delta(\gg_{-1})}\alpha 
= \sum_{i=1}^n(1-n)\varepsilon_i$.  
We will also write  
$\nabla_{P(n)}(\lambda)$  
to specify that the  thin Kac module  is a representation over the  algebraic supergroup  
$P(n)$.

Let
$
\rho: = \sum_{1\leq i\leq n} (n-i)\varepsilon_i. 
$ 
We have: 
  
\begin{lemma} 
\label{lem:superchar-thin-Kac} 
The supercharacter of the thin Kac module   
$\nabla(\lambda)$   
with weight   
$\lambda$  
is  
\begin{equation}
\label{eqn:superchar-thin-Kac-Pn}
\sch \nabla(\lambda) 
= (-1)^{p(\lambda)} 
\frac{ \mathcal{R}_{-1} }{
e^{\rho}\mathcal{R}_0} 
\sum_{w\in W}(-1)^{\ell(w)}e^{w(\lambda+ \rho)}.
\end{equation} 
\end{lemma}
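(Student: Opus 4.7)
The plan is to compute $\sch \nabla(\lambda)$ by first identifying $\nabla(\lambda)$ as a $\mathfrak{g}_0$-module via PBW, and then combining with the classical Weyl character formula for $\mathfrak{g}_0 \simeq \mathfrak{gl}(n)$. First, using the coinduction description $\nabla(\lambda) \simeq \Coind_{\mathfrak{g}_0 \oplus \mathfrak{g}_1}^{\mathfrak{g}} V(\lambda) = \Hom_{U(\mathfrak{g}_0 \oplus \mathfrak{g}_1)}(U(\mathfrak{g}), V(\lambda))$ together with the PBW isomorphism $U(\mathfrak{g}) \simeq U(\mathfrak{g}_{-1}) \otimes U(\mathfrak{g}_0 \oplus \mathfrak{g}_1)$ of right $U(\mathfrak{g}_0 \oplus \mathfrak{g}_1)$-modules, one obtains a $\mathfrak{g}_0$-isomorphism
\[
\nabla(\lambda) \simeq \Lambda(\mathfrak{g}_{-1}^{*}) \otimes V(\lambda),
\]
because $\mathfrak{g}_{-1}$ is purely odd so $U(\mathfrak{g}_{-1}) = \Lambda(\mathfrak{g}_{-1})$, and therefore $U(\mathfrak{g}_{-1})^{*} \simeq \Lambda(\mathfrak{g}_{-1}^{*})$, with $\Lambda^{k}$ sitting in $\mathbb{Z}_2$-parity $k \bmod 2$.

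Next, every generator of $\mathfrak{g}_{-1}^{*}$ is odd, so an odd line of weight $\mu$ contributes $1 - e^{\mu}$ to the supercharacter of its exterior algebra. Since $\mathfrak{g}_{-1}^{*}$ has $\mathfrak{h}$-weights $\{-\alpha : \alpha \in \Delta(\mathfrak{g}_{-1})\}$, multiplicativity yields
\[
\sch \Lambda(\mathfrak{g}_{-1}^{*}) = \prod_{\alpha \in \Delta(\mathfrak{g}_{-1})} (1 - e^{-\alpha}) = \mathcal{R}_{-1}.
\]
Combining this with the Weyl character formula
\[
\ch V(\lambda) = \frac{\sum_{w \in W} (-1)^{\ell(w)} e^{w(\lambda + \rho)}}{e^{\rho}\mathcal{R}_{0}}
\]
and the multiplicativity of $\sch$ over tensor products produces the right-hand side of \eqref{eqn:superchar-thin-Kac-Pn} up to an overall sign.

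The final step is to pin down that overall sign as $(-1)^{p(\lambda)}$. This is done by tracking the parity convention through the equivalent description $\nabla(\lambda) = \Pi^{n(n-1)/2}\Ind V(\lambda - \gamma)$: the highest weight line of $\Ind V(\lambda - \gamma)$ lies in $\Lambda^{n(n-1)/2}(\mathfrak{g}_{-1}) \otimes v_{\lambda - \gamma}$, the top exterior power carrying weight $\gamma$ and parity $n(n-1)/2 \bmod 2$, so the total weight is $\lambda$. The $\Pi^{n(n-1)/2}$ shift is inserted precisely to calibrate this highest weight line with the parity $p(\lambda)$; using that $\sum \gamma_i = n(1-n)$ is even and that the parity function $p$ is therefore additive on the pair $(\lambda,\gamma)$, one checks that the global sign produced on $\sch\nabla(\lambda)$ is exactly $(-1)^{p(\lambda)}$. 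This parity bookkeeping is the only technical obstacle; once it is done, the rest of the argument is a routine application of PBW and Weyl's character formula.
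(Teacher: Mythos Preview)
Your proof is correct and follows essentially the same approach as the paper: identify $\nabla(\lambda)$ with $\Lambda(\mathfrak{g}_{-1}^{*})\otimes V(\lambda)$ as an $\mathfrak{h}$-module, compute $\sch\Lambda(\mathfrak{g}_{-1}^{*})=\mathcal{R}_{-1}$, and apply the Weyl character formula. The only cosmetic difference is that the paper writes Weyl's formula with $\rho_0=\tfrac12\sum_{\alpha\in\Delta^+(\mathfrak{g}_0)}\alpha$ and then notes $-\rho_0+w\rho_0=\rho-w\rho$ to pass to $\rho$, whereas you use $\rho$ from the start; since $\rho-\rho_0$ is $W$-invariant these are equivalent, and your more detailed PBW and parity bookkeeping is a welcome elaboration of points the paper leaves implicit.
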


\begin{proof} 
Since $\nabla(\lambda)\cong \bigwedge 
(\mathfrak{g}_{-1}^*)\otimes V(\lambda)$ as $\mathfrak h$-modules,  
the supercharacter of $\bigwedge(\mathfrak{g}_{-1}^*)$ 
and the character of $V(\lambda)$ are 
\[
\sch \bigwedge (\mathfrak{g}_{-1}^*) 
= 
\prod_{\beta\in\Delta(\mathfrak{g}_{-1})} ( 1 - e^{-\beta} )   
\quad   
\mbox{ and } 
\quad  
\ch V(\lambda) =   
\left( e^{\rho_0}  
\mathcal{R}_0  
\right)^{-1} 
\sum_{w\in W}(-1)^{\ell(w)}w(e^{\lambda+\rho_0}),     
\]   
respectively,  
where $\rho_0 = \frac{1}{2}\sum_{\alpha\in\Delta^+  (\mathfrak{g}_{0}) } \alpha$. 
Since $-\rho_0 + w\rho_0 = \rho - w\rho$, we obtain 
\eqref{eqn:superchar-thin-Kac-Pn}. 
\end{proof}

\subsection{Weight diagrams} 
Let $\{ \overline{\lambda}_1, \ldots, \overline{\lambda}_n\} \subseteq \mathbb{Z}$ 
be such that 
$\lambda + \rho  =  \sum_{i=1}^n \overline{\lambda}_i \varepsilon_i$.  
The weight diagram 
$d_{\lambda}$ 
corresponding to a dominant weight 
$\lambda$ 
is the labeling of the line of integers by symbols 
$\bullet$ and $\circ$, 
where $i$ has label $\bullet$ if 
$i\in \{ \overline{\lambda}_1, \ldots, \overline{\lambda}_n \}$, 
and 
$\circ$ 
if 
$i \not\in \{ \overline{\lambda}_1, \ldots, \overline{\lambda}_n \}$. 
For example, 
$ds_{0}$ is 
\[  
\xymatrix@-1pc{  
\ldots   &  \underset{-1}{\circ}   &  \underset{0}{\bullet}   & 
\underset{1}{\bullet}   &  \ldots   &  \underset{n-1}{\bullet} 
 & \underset{n}{\circ}   &  \underset{n+1}{\circ}   &  \ldots 
}
\] 
and 
$d_{-3  \varepsilon_n - \varepsilon_{n-1} }$   
is  
\[ 
\xymatrix@-1pc{ 
\ldots   &  \underset{-4}{\circ}   &  \underset{-3}{\bullet} 
 & \underset{-2}{\circ}   &  \underset{-1}{\circ}   & \underset{0}{\bullet}  & 
\underset{1}{\circ}   &   \underset{2}{\bullet}  &  \underset{3}{\bullet} 
 & \ldots  &  \underset{n-1}{\bullet}. 
} 
\]

Note that  
$\lambda \leq \mu$  
if and only if  
$\lambda_i \geq \mu_i$ 
for each $i$.  
In terms of weight diagrams,   
the $i$-th black ball in 
$d_{\lambda}$  
(counted from left)  
lies further to the right of the  
$i$-th black ball of  
$d_{\mu}$.

\subsection{The Grothendieck ring}
\label{subsubsection:reduced-Gr-Pn}
Let 
$K(P(n))$ 
be the Grothendieck ring of  
$\mathcal{F}_n$, 
and define 
\begin{equation}
\label{eqn:red-Groth-ring}
J(P(n))   =  K(P(n))/\langle [M] + [\Pi M] : M \in \mathcal{F}_n \rangle.  
\end{equation}
The ring 
$J(P(n))$ 
is isomorphic to the reduced Grothendieck ring 
$K(P(n))/ \langle [M] - [\Pi M] : M \in \mathcal{F}_n \rangle$,  
with the isomorphism given by 
$[L(\lambda)] \mapsto  (-1)^{p(\lambda)}[L(\lambda)]$, 
where  
$L(\lambda)$  
is the simple module of highest weight  
$\lambda$.

One may identify $J(P(n))$ as the ring of supercharacters as follows: 
let  
$\Lambda \subseteq \mathfrak{h}^*$  
be the abelian group of integral weights of  
$\mathfrak{g}_0$  
and  
$W$  
be the Weyl group of 
$\mathfrak{g}_0$. 
The supercharacter function  
$\sch: J(P(n))\rightarrow  \Span_ {\mathbb Z }\{e^\lambda : \lambda\in\Lambda\}$,   
sends 
$[M]\mapsto \sch(M)$. 
 Since isomorphic modules have the same supercharacter and 
 $\sch(M) = -\sch \Pi M$, 
 $\sch$ 
 is well-defined. 
 Furthermore, 
 $\sch$ 
 is injective since two irreducible modules have the same character if and only if they are isomorphic.

 Throughout this manuscript, 
 we will also write 
 $x_i := e^{\varepsilon_i}$ 
 for 
 $1\leq i\leq n$. 
The ring 
$$
J_n := 
\{f\in \mathbb{Z}[x_1^{\pm 1},\ldots, x_n^{\pm 1}]^{S_n}: f|_{x_i = x_j^{-1}=t} 
\mbox{ is independent of }t \mbox{ for }i\not=j \}
$$ 
is then identified with a subring of 
$\Span_ {\mathbb Z } \{ e^\lambda : \lambda\in\Lambda \}$. 

The following lemma is proved using restriction  to  subalgebras of the form 
$\mathfrak g_{-\alpha} \oplus  \mathfrak h  \oplus  \mathfrak g_\alpha$ 
where 
$\alpha$ 
is an odd root and 
$2\alpha$ 
is not a root.

\begin{lemma}[{\cite[Prop.~4.3]{MR2776360}}]
\label{lemma:Pn-superchar-containment}
We have 
$ J(P(n))\subseteq J_n. $
\end{lemma}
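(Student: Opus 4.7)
The plan is to verify the two conditions defining $J_n$ separately. The $S_n$-invariance of $\sch(M)$ is automatic: restricted to $\mathfrak{g}_0 = \mathfrak{gl}(n)$, the module $M$ is a finite-dimensional $\mathfrak{g}_0$-module, and its Weyl group $S_n$ permutes weight spaces while preserving the $\mathbb{Z}_2$-grading (inherited from the $\mathfrak g$-action), so $\sdim M_{w\mu} = \sdim M_\mu$ for all $w \in S_n$.

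For the supersymmetry condition, I would follow the Sergeev--Veselov strategy of restricting to a ``rank-one'' odd-root subalgebra. Fix $i \neq j$ and set $\alpha := \varepsilon_i + \varepsilon_j$. This is an isotropic odd root of $\mathfrak g$, and crucially both $\alpha$ and $-\alpha$ lie in $\Delta$ (this requires $i \neq j$, since $-2\varepsilon_i \notin \Delta(\mathfrak{g}_{-1})$ due to the antisymmetry of $C$). Moreover $\pm 2\alpha \notin \Delta$. Choose nonzero vectors $e_{\pm \alpha} \in \mathfrak{g}_{\pm \alpha}$ and form the sub-superalgebra $\mathfrak{s} := \mathfrak{g}_{-\alpha} \oplus \mathfrak{h} \oplus \mathfrak{g}_\alpha$. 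Since $\pm 2\alpha$ are not roots, $e_\alpha^2 = \tfrac{1}{2}[e_\alpha, e_\alpha] = 0$ and similarly $e_{-\alpha}^2 = 0$. A direct computation in the explicit block-matrix realization of $\mathfrak{p}(n)$ shows that $h_\alpha := [e_\alpha, e_{-\alpha}] \in \mathfrak{h}$ acts on the weight space $M_\mu$ by the scalar $\mu_j - \mu_i$, and in particular $\alpha(h_\alpha) = 0$.

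Next, I would group the weights of $M$ into $\alpha$-strings $N_\mu := \bigoplus_{k \in \mathbb{Z}} M_{\mu + k\alpha}$, each a finite-dimensional $\mathbb{Z}_2$-graded $\mathfrak{s}$-submodule of $M$. Because $\alpha(h_\alpha) = 0$, the central element $h_\alpha$ acts on all of $N_\mu$ by the single scalar $\mu_j - \mu_i$. Consider the odd operator $\phi := e_\alpha + e_{-\alpha}$ on $N_\mu$. Using $e_\alpha^2 = e_{-\alpha}^2 = 0$ one obtains $\phi^2 = \{e_\alpha, e_{-\alpha}\} = h_\alpha$. Hence when $\mu_i \neq \mu_j$, $\phi$ is invertible and supplies an odd isomorphism between the even and odd parts of $N_\mu$, forcing $\sdim N_\mu = 0$.

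To finish, observe that every weight $\mu + k\alpha$ on the same $\alpha$-string has identical entries away from $\{i, j\}$ and preserves the value $\mu_i - \mu_j$. Thus $e^{\mu + k\alpha}\big|_{x_i = x_j^{-1} = t} = t^{\mu_i - \mu_j} \prod_{l \neq i, j} x_l^{\mu_l}$ independently of $k$, and collecting terms by $\alpha$-strings gives
\[
\sch(M)\big|_{x_i = x_j^{-1} = t} \;=\; \sum_{[\mu]} \sdim(N_\mu) \cdot t^{\mu_i - \mu_j} \prod_{l \neq i, j} x_l^{\mu_l},
\]
where $[\mu]$ ranges over $\alpha$-string representatives. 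The vanishing of $\sdim N_\mu$ whenever $\mu_i \neq \mu_j$ leaves only $t$-independent terms, yielding the required supersymmetry. The main obstacle I anticipate is the matrix computation identifying $h_\alpha$ and its action: unlike for basic classical Lie superalgebras, $\mathfrak{p}(n)$ carries no invariant bilinear form, so the precise scalar $\mu_j - \mu_i$ must be derived by hand from the block-matrix description of root vectors rather than read off from a Killing-type pairing.
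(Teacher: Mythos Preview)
Your proposal is correct and follows exactly the approach the paper indicates: the paper does not spell out a proof but refers to \cite[Prop.~4.3]{MR2776360} and states that the argument proceeds by restriction to subalgebras $\mathfrak{g}_{-\alpha}\oplus\mathfrak{h}\oplus\mathfrak{g}_\alpha$ with $\alpha$ odd and $2\alpha$ not a root, which is precisely what you carry out. Your explicit block-matrix computation of $h_\alpha=[e_\alpha,e_{-\alpha}]$ and its eigenvalue $\mu_j-\mu_i$ is accurate, and the odd-involution argument via $\phi=e_\alpha+e_{-\alpha}$ with $\phi^2=h_\alpha$ is the standard Sergeev--Veselov mechanism for forcing $\sdim N_\mu=0$ on strings with $\mu_i\neq\mu_j$.
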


\subsection{Translation functors} 
\label{subsection:Translation-functors} 
Let  $\Theta' = -\otimes V:\mathcal{F}_n\rightarrow \mathcal{F}_n$  
be an endofunctor. 
Consider the involutive anti-automorphism 
$\sigma:\mathfrak{gl}(n|n)\rightarrow \mathfrak{gl}(n|n)$ 
defined as 
\[ 
\begin{pmatrix}
A  & B \\  
C  & D \\  
\end{pmatrix}^{\sigma} 
:= 
\begin{pmatrix}
-D^t  &   B^t \\  
-C^t  &  -A^t \\  
\end{pmatrix}.  
\] 
One can see that 
$\mathfrak{p}(n) = \{ x\in \mathfrak{gl}(n|n): x^{\sigma} = x\}$, 
and we set  
$\mathfrak{p}(n)^{\perp}  :=  \{ x \in \mathfrak{gl}(n|n) : x^{\sigma} = -x \}$. 

Since 
$\mathfrak{p}(n)$  
and  
$\mathfrak{p}(n)^{\perp}$  
form maximal isotropic subspaces with respect to the form 
$\mathfrak{str}XY$  
we obtain 
a nondegenerate bilinear  
$\mathfrak{p}(n)$-invariant pairing 
  $\langle \:\cdot\: , \:\cdot\: \rangle: \mathfrak{p}(n)\otimes \mathfrak{p}(n)^{\perp}\rightarrow \mathbb{C}$.  
Choose  
$\mathbb{Z}$-homogeneous bases 
$\{ X_i\}$   
in  
$\mathfrak{p}(n)$  
and  
$\{ X^i\}$  
in  
$\mathfrak{p}(n)^{\perp}$  
such that  
$\langle X^i , X_j \rangle = \delta_{ij}$.  
We define the fake Casimir element as  
\[ 
\Omega 
:= 
     2 \sum_{i=1}^n X_i \otimes X^i \in \mathfrak{p}(n)\otimes \mathfrak{p}(n)^{\perp} 
\subseteq \mathfrak{p}(n) \otimes \mathfrak{gl}(n|n). 
\]
 
Given a $\mathfrak{p}(n)$-module $M$, 
let 
$\Omega_M : M\otimes V\rightarrow M\otimes V$ 
be the linear map 
\[ 
\Omega_M(m\otimes v) 
= 2 \sum_{1\leq i \leq n} (-1)^{\overline{X}_i \overline{m}} X_i m \otimes X^i v,
\]  
where 
$m\in M$ 
and 
$v\in V$ 
are homogeneous. 
By \cite[Lemma~4.1.4]{BDEHHILNSS}, 
we see that $\Omega_M$ 
commutes with the action of 
$\mathfrak{p}(n)$  
on  
$M\otimes V$  
for any  
$\mathfrak{p}(n)$-module $M$. 

For $k\in \mathbb{C}$, 
define a functor 
$\Theta_k':\mathcal{F}_n\rightarrow \mathcal{F}_n$ 
as 
$\Theta' = -\otimes V$ 
followed by the projection onto 
the generalized $k$-eigenspace for 
$\Omega$, i.e., 
\[
\Theta_k'(M) := \bigcup_{m> 0} \ker(\Omega-k\: \text{Id})^m \Big|_{M\otimes V}. 
\] 
Since 
$\Theta_k'=0$  
if  
$k\notin\mathbb{Z}$, 
we set 
$\Theta' = \bigoplus_{k\in \mathbb{Z}} \Theta_k'$, 
and 
$\Theta_k := \prod^{k} \Theta_k'$  
when  
$k\in \mathbb{Z}$.  
The endofunctors  
 $\Theta_k$ of $\mathcal{F}_n$  
 for 
 $k\in \mathbb{Z}$ 
 are exact.

The following is \cite[Prop.~5.2.2]{BDEHHILNSS}.  

\begin{proposition}[Translation of thin Kac modules]  \label{smallKac} Let $k\in\mathbb{Z}$. Then 
\begin{enumerate}
 \item\label{item:right-trans-fnr} 
    $\Theta'_k \nabla(\lambda) = \nabla(\mu'')$ 
    if  $d_{\lambda}$ 
    looks as follows at positions 
    $k-1, k, k+1$, with $d_{\mu''}$ 
    displayed underneath:
 \begin{align*} 
  d_\lambda 
  = \xymatrix{  & \underset{k-1}{\bullet}  & \underset{k}{\bullet}  & \underset{k+1}{\circ} }  \\
 d_{\mu''} 
  = \xymatrix{  & \underset{k-1}{\bullet}  & \underset{k}{\circ}  & \underset{k+1}{\bullet} }
\end{align*}
\item\label{item:left-trans-fnr}  
    $\Theta'_k \nabla(\lambda)=\Pi\nabla(\mu')$ 
    if  $d_{\lambda}$ 
    looks as follows at positions 
    $k-1, k, k+1$, with $d_{\mu'}$ displayed underneath:
\begin{align*} 
 d_{\lambda} 
 = \xymatrix{  & \underset{k-1}{\circ}  & \underset{k}{\bullet}  & \underset{k+1}{\bullet}   }  \\
  d_{ \mu'} 
  = \xymatrix{  & \underset{k-1}{\bullet}  & \underset{k}{\circ}  & \underset{k+1}{\bullet}  }
\end{align*}
\item\label{item:SES-trans-fnr} 
    In case 
    $d_\lambda$ 
    looks locally at positions 
    $k-1,k,k+1$ 
    as below, 
    there is a short exact sequence
\[
0 \rightarrow \nabla(\mu'') \rightarrow \Theta'_k \nabla(\lambda) 
\rightarrow  \Pi\nabla(\mu') \rightarrow 0, 
\] 
where 
$d_{\mu'}$  
and  
$d_{\mu''}$  
are obtained from  
$d_\lambda$   
by moving one black ball away from position  
$k$  
(to position $k-1$, 
respectively, 
$k+1$) 
as follow:
\begin{align*} 
  d_\lambda 
     = \xymatrix{  & \underset{k-1}{\circ}  & \underset{k}{\bullet}  & \underset{k+1}{\circ}   }  \\  
   d_{\mu'} 
     = \xymatrix{  & \underset{k-1}{\bullet}  & \underset{k}{\circ}  & \underset{k+1}{\circ}  }\\  
    d_{\mu''} 
     = \xymatrix{  & \underset{k-1}{\circ}  & \underset{k}{\circ}  & \underset{k+1}{\bullet}      
     }  
\end{align*}  
\item    
    $\Theta'_k\nabla(\lambda)  =  0$ 
    in all other cases. 
\end{enumerate} 
\end{proposition}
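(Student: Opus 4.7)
The plan is to analyze $\nabla(\lambda)\otimes V$ by combining a coinduction-based filtration with an explicit eigenvalue calculation for the fake Casimir $\Omega$.

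First, using the realization $\nabla(\lambda)\simeq\Coind_{\mathfrak{g}_0\oplus\mathfrak{g}_1}^{\mathfrak{g}}V(\lambda)$ and the tensor identity $\Coind(N)\otimes V\simeq\Coind(N\otimes\operatorname{Res}V)$, I would write $\nabla(\lambda)\otimes V\simeq\Coind_{\mathfrak{g}_0\oplus\mathfrak{g}_1}^{\mathfrak{g}}(V(\lambda)\otimes V)$. Since $\mathfrak{g}_1 V_{\overline 0}=0$, the subspace $V_{\overline 0}\subset V$ is a $(\mathfrak{g}_0\oplus\mathfrak{g}_1)$-submodule, giving the short exact sequence $0\to V_{\overline 0}\to V\to V_{\overline 1}\to 0$ of $(\mathfrak{g}_0\oplus\mathfrak{g}_1)$-modules. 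Applying the exact coinduction functor and refining by the Pieri rule for the $\mathfrak{gl}(n)$-tensor products $V(\lambda)\otimes V_{\overline 0}$ and $V(\lambda)\otimes V_{\overline 1}$ (whose weights are $\varepsilon_i$ and $-\varepsilon_i$, respectively), I obtain a filtration of $\nabla(\lambda)\otimes V$ whose subquotients are $\nabla(\lambda+\varepsilon_i)$ sitting in the submodule part and $\Pi\nabla(\lambda-\varepsilon_j)$ in the quotient part, ranging over those $i,j$ for which the new weights remain dominant. The parity shift in the quotient part comes from $V_{\overline 1}$ being odd.

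Next, I would compute the $\Omega$-eigenvalue on each subquotient. Since $\Omega$ commutes with the $\mathfrak{p}(n)$-action and each thin Kac module $\nabla(\mu)$ is indecomposable with simple head, $\Omega$ acts by a single scalar on each subquotient of the filtration. A highest-weight calculation using the dual bases and the pairing $\langle X^i,X_j\rangle=\delta_{ij}$ shows this scalar equals $\overline\lambda_i$ on the $\nabla(\lambda+\varepsilon_i)$-subquotient and equally $\overline\lambda_j$ on the $\Pi\nabla(\lambda-\varepsilon_j)$-subquotient. Translating to the weight diagram $d_\lambda$, the generalized $k$-eigenspace $\Theta'_k\nabla(\lambda)$ is built precisely from those subquotients in which a black ball originally at position $k$ is moved by one unit. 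At most two such moves are possible: a right move to $k+1$ (possible exactly when $k+1$ is empty, contributing $\nabla(\mu'')$ as a submodule) and a left move to $k-1$ (possible exactly when $k-1$ is empty, contributing $\Pi\nabla(\mu')$ as a quotient). The four cases of the proposition correspond exactly to the four configurations at positions $k-1,k,k+1$: both moves allowed (case 3), only right (case 1), only left (case 2), or neither, including the absence of a ball at $k$ (case 4).

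The main obstacle is the eigenvalue formula for $\Omega$. Unlike the basic classical case, $\mathfrak{p}(n)$ admits no invariant bilinear form, so $\Omega$ is a genuinely ``fake'' Casimir valued in $\mathfrak{p}(n)\otimes\mathfrak{gl}(n|n)$ rather than a central element of the enveloping algebra. Consequently, the scalar cannot be read off from an $\mathfrak{h}$-weight alone; one must evaluate $\Omega$ on a cyclic vector generating the image of each $\nabla(\lambda\pm\varepsilon_i)$ inside the filtration, keeping careful track of signs in the supercommutator and of the contributions of basis elements in $\mathfrak{g}_{-1}$, which pair with their duals in $\mathfrak{g}_1\subset\mathfrak{gl}(n|n)$ in a less symmetric way than in the orthosymplectic case. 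Once this scalar is pinned down, the four cases follow by inspection, and in case (3) the short exact sequence is inherited directly from the coinduction of $0\to V_{\overline 0}\to V\to V_{\overline 1}\to 0$, so no further splitting argument is required.
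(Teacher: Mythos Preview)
The paper does not prove this proposition; it is quoted verbatim from \cite[Prop.~5.2.2]{BDEHHILNSS} and invoked as a black box. So there is no ``paper's own proof'' to compare against.

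That said, your outline is essentially the argument given in \cite{BDEHHILNSS}. The tensor identity for coinduction, the two-step filtration of $V$ as a $(\mathfrak{g}_0\oplus\mathfrak{g}_1)$-module, and the Pieri refinement produce exactly the right subquotients, and the identification of $\Theta'_k$ then reduces to the eigenvalue computation you describe. Your warning about the eigenvalue step is well placed: in \cite{BDEHHILNSS} this is handled by first writing $\Omega$ as a sum of a $\mathfrak{gl}(n)$-piece plus correction terms from $\mathfrak{g}_{\pm1}$, and then evaluating on an explicit highest-weight vector in each layer; the outcome is indeed that the generalized eigenvalue on the subquotient obtained by moving the ball at position $k$ (in either direction) is $k$. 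One small point to be careful with: the tensor identity for \emph{coinduction} requires a finite-dimensionality or duality argument (it is the induction identity $\Ind(N)\otimes V\simeq\Ind(N\otimes\operatorname{Res}V)$ that is automatic), so you should either pass through the $\Ind$-description of $\nabla(\lambda)$ given in \S\ref{subsection:thin-Kac} or justify the coinduction version directly.
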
 
\subsection{Parabolic induction} 
\label{parabolic induction}
We consider the standard scalar product on 
$\mathfrak h^*$ 
such that 
$(\varepsilon_i,\varepsilon_j) = \delta_{i,j}$. 
Let 
$\gamma$ 
be some weight. 
Set
$$
\mathfrak{k} := \mathfrak h\oplus \bigoplus_{ ( \alpha, \gamma ) = 0} \mathfrak{g}_{\alpha}, 
\quad  
\mathfrak{r} := \bigoplus_{ ( \alpha, \gamma ) > 0 } \mathfrak{g}_{\alpha}, 
\quad   
\mathfrak{q} := \mathfrak{k} \oplus \mathfrak{r}.
$$
The subalgebra 
$\mathfrak{q}$ 
is called a parabolic subalgebra of 
$\mathfrak{g}$ 
with the Levi subalgebra  
$\mathfrak{k}$ 
and the nilpotent radical 
$\mathfrak{r}$.
If 
$Q$ 
is the corresponding parabolic subgroup of 
$G$, 
then 
$G/Q$ 
is a generalized flag supervariety.

Let 
$\lambda$ 
be a weight such that 
$\lambda(\mathfrak{h}\cap [\mathfrak{k}, \mathfrak{k}]) = 0$. 
We denote by 
$\mathcal{O}(-\lambda)$ 
the line bundle on 
$G/Q$ 
induced by the one dimensional representation of  
$Q$  
with weight  
$-\lambda$.  
Set 
\[ 
\mathcal{E}(\lambda)  =  \sum_i (-1)^i \sch H^i(G/P,\mathcal{O}(-\lambda)).  
\] 
By definition 
$\mathcal E(\lambda)$ 
is in 
$J(P(n))$. 
By  \cite[Prop.~1]{MR2734963}, 
\[  
\mathcal{E}(\lambda)  
 =  \frac{1}{  e^\rho 
\mathcal{R}_0} \sum_{w\in W}   
(-1)^w w 
\left( 
e^{\lambda + \rho} \prod_{\alpha\in \Delta_1(\mathfrak{r})} ( 1 - e^{-\alpha} )
\right) 
\] 
where 
$\Delta_1 (\mathfrak{r}) := \{ \alpha \in \Delta_{1} : ( \alpha, \gamma ) > 0  \}$.  
 
\section{Duflo--Serganova homomorphism for $P(n)$}
\label{section:DS-functor}
We show that the Duflo--Serganova functor  induces  a homomorphism between 
the rings of supercharacters, 
and discuss the kernel of this homomorphism 
(cf. \cite{hoyt2016grothendieck}).  

\subsection{The $ds_n$ homomorphism} 
\label{subsection:well-defined-kernel} 
Let 
$x\in \mathfrak{g}_1\oplus \mathfrak g_{-1}$ 
such that 
$[x, x] = 0$. 
Then 
$x^2$ 
is zero in 
$U( \mathfrak g )$ 
and for every 
$\mathfrak g$-module $M$, 
we define 
$
M_x = \ker_M x/ xM, 
$
and similarly,
$
\mathfrak g_x = \mathfrak g^x /[x, \mathfrak g], 
$ 
where 
$\mathfrak{g}^x = \{ g \in \mathfrak{g}: [x, g] = 0 \}$.
By \cite[Lemma~6.2]{duflo2005associated}, 
$M_x$  carries a natural  
$\mathfrak{g}_x$-module structure. 
Moreover, the
  Duflo--Serganova functor  
  $DS_x: M\mapsto M_x$
  is a symmetric monoidal functor from the category of 
  $\mathfrak{g}$-modules to the category of $\mathfrak{g}_x$-modules. 
We consider a special case of  the DS functor:  
\[  
DS_{n}: \mathcal{F}_n \rightarrow \mathcal{F}_{n-2},\quad DS_x(M) = M_x, 
\]
defined as follows.
Suppose 
$x = x_\beta$ for $\beta\in \Delta( \mathfrak g_{-1} )$. 
By \cite[Lemma~5.1.2]{ES-Deligne-perip}, 
$\mathfrak g_x \cong \mathfrak p(n-2)$. 
We embed  
$\mathfrak g_x$ 
in 
$\mathfrak g$ 
such that the Cartan subalgebra  
$\mathfrak h_x$ 
of  
$\mathfrak g_x$  
is contained in  
$\{h\in \mathfrak h : \beta(h)=0\}$.  
By  \cite[Sec.~3]{hoyt2016grothendieck},   
the map   
\begin{equation}  
\label{eqn:induced-ds-map}  
ds_{x}:  J(P(n)) \rightarrow  J(P(n-2)), 
\quad  
\mbox{ where } 
\:\: 
ds_{x}(\sch M)  =  \sch(DS_x(M)), 
\end{equation}  
 is well-defined and equal to  
$(\sch M)|_{h_{x}}$.  
Since  
$\beta  =  -\varepsilon_i - \varepsilon_j$  and 
$f\in J(P(n))$  satisfies that 
$f|_{x_i = x_j^{-1} = t}$   
 is independent of  
$t$,  
we get that 
$f|_{h_{x}} 
= f|_{ \{ h \in \mathfrak{h}: \varepsilon_i(h) = -\varepsilon_j(h)\} } = f|_{ x_i = x_j^{-1} }$.

Let $-\varepsilon_i - \varepsilon_j \in \Delta(\mathfrak{g}_{-1})$, 
and define  
\begin{equation}
ds_n: J(P(n)) \rightarrow J(P(n-2)),   \quad  ds_n  =  p \circ ds_{x_{\alpha}}, 
\end{equation} 
where  $p$  is a bijection 
$p: \{1,\ldots, n \}\setminus \{ i, j \} \rightarrow \{ 1, \ldots, n-2 \}$. 
Since the elements in  $J(P(n))$  are  $S_n$-invariant, 
$ds_n$ is independent of  $i$ and $j$.  
Moreover, the map  $ds_n$  extends naturally to   $J_n$  
by the evaluation 
$ds_n(f)  =  f|_{ x_{n-1} = x_n^{-1} = t }$ 
(eventually we show that 
$J(P(n)) = J_n$ 
but for now 
$J_n\subseteq J(P(n))$).

We define 
\begin{equation}
\label{eqn:dsnk}
ds_n^{(k)} := ds_{n-2k+2} \circ \cdots \circ ds_{n}. 
\end{equation} 
Note that applying  
$ds_n^{(k)}$  
is the same as applying  
$ds_x$ 
for 
$x$ 
of higher rank.

\subsection{The kernel of $ds_n$}
\label{subsection:kernel-ds-map} 
The following proposition is a straightforward generalization of  
\cite[Thm.~17]{hoyt2016grothendieck}. 
  
\begin{proposition}
\label{lemma:kernel-DS-Pn}
The kernel of $ds_{n}$ is spanned by the supercharacters of thin Kac modules. 
\end{proposition}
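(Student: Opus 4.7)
The idea is to identify both $\ker ds_n$ and the $\mathbb Z$-span of the $\sch\nabla(\lambda)$ with the same explicit submodule of the symmetric Laurent polynomials, namely $\mathcal R_{-1}\cdot\mathbb Z[x_1^{\pm 1},\ldots,x_n^{\pm 1}]^{S_n}$, where $\mathcal R_{-1}=\prod_{i<j}(1-x_ix_j)$.

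\emph{Thin Kac supercharacters lie in the kernel and span the expected module.} Starting from the formula in Lemma~\ref{lem:superchar-thin-Kac}, I would rewrite it using the Weyl character formula for $\mathfrak{gl}(n)$ (using that $\rho-\rho_0$ is $W$-invariant) as
\[
\sch\nabla(\lambda)=(-1)^{p(\lambda)}\mathcal R_{-1}\cdot\ch V(\lambda).
\]
The factor $1-x_{n-1}x_n$ in $\mathcal R_{-1}$ vanishes under the evaluation $x_{n-1}=x_n^{-1}=t$ that defines $ds_n$, so each $\sch\nabla(\lambda)$ lies in $\ker ds_n$. Since $\{\ch V(\lambda):\lambda\in\Lambda_n\}$ is a $\mathbb Z$-basis of $\mathbb Z[x_1^{\pm 1},\ldots,x_n^{\pm 1}]^{S_n}$ and the factors $(-1)^{p(\lambda)}$ are units, the $\mathbb Z$-span of $\{\sch\nabla(\lambda):\lambda\in\Lambda_n\}$ equals $\mathcal R_{-1}\cdot\mathbb Z[x_1^{\pm 1},\ldots,x_n^{\pm 1}]^{S_n}$.

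\emph{The kernel lies in this span.} For $f\in\ker ds_n$, Lemma~\ref{lemma:Pn-superchar-containment} gives $f\in J_n\subseteq\mathbb Z[x_1^{\pm 1},\ldots,x_n^{\pm 1}]^{S_n}$. The hypothesis $ds_n(f)=0$ means $f$ vanishes modulo $(1-x_{n-1}x_n)$; since $\mathbb Z[x_1^{\pm 1},\ldots,x_n^{\pm 1}]/(1-x_{n-1}x_n)\cong\mathbb Z[x_1^{\pm 1},\ldots,x_{n-1}^{\pm 1}]$ is a domain, this ideal is prime and $f$ is divisible by $1-x_{n-1}x_n$. Applying $S_n$-symmetry of $f$, the same divisibility holds for every $1-x_ix_j$ with $i<j$. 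These are pairwise non-associate primes in the UFD $\mathbb Z[x_1^{\pm 1},\ldots,x_n^{\pm 1}]$, so unique factorization yields $f=\mathcal R_{-1}\cdot g$ for a Laurent polynomial $g$, which is forced to be $S_n$-invariant because $f$ and $\mathcal R_{-1}$ both are. Combined with the previous paragraph, this proves $\ker ds_n=\mathcal R_{-1}\cdot\mathbb Z[x_1^{\pm 1},\ldots,x_n^{\pm 1}]^{S_n}=\mathbb Z\text{-span}\{\sch\nabla(\lambda):\lambda\in\Lambda_n\}$.

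\emph{Main obstacle.} The heart of the argument is the divisibility step: turning the single vanishing condition $f|_{x_{n-1}x_n=1}=0$ into the stronger divisibility by the entire product $\mathcal R_{-1}$. This relies on two ingredients, the $S_n$-invariance supplied by Lemma~\ref{lemma:Pn-superchar-containment} and the UFD property of $\mathbb Z[x_1^{\pm 1},\ldots,x_n^{\pm 1}]$ together with the fact that the $1-x_ix_j$ are pairwise non-associate primes. Everything else is bookkeeping around the thin Kac supercharacter formula and the Weyl character theory of $GL(n)$.
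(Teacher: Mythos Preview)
Your proof is correct and follows essentially the same route as the paper: both show that $f\in\ker ds_n$ forces divisibility by $1-x_{n-1}x_n$, upgrade this via $S_n$-invariance to divisibility by the full product $\mathcal{R}_{-1}$, and then expand the symmetric quotient $g$ in Schur polynomials to recognize thin Kac supercharacters. You supply more detail than the paper (the UFD/non-associate-primes justification, and the explicit rewriting $\sch\nabla(\lambda)=(-1)^{p(\lambda)}\mathcal{R}_{-1}\cdot\ch V(\lambda)$), but the strategy is the same.
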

\begin{proof}
Suppose $f\in \ker ds_n$.  Then 
$f$ 
is divisible by 
$1-x_{n-1}x_n$.  
Since $f$ is $W$-invariant,   
$f$ 
is also divisible by 
$\prod_{i<j}(1-x_ix_j)$  
and hence  
$f = \prod_{i < j}( 1 - x_i x_j )g = \mathcal{R}_{-1} \cdot g,$   
where  $g$  is also  $W$-invariant.   
Write  $g$  
as a linear combinations of Schur functions 
\[   
g = \sum_{\lambda\in\mathfrak{h}^*}^{\finite}  a_{\lambda}  e^{-\rho} \mathcal{R}_0^{-1}\sum_{w\in W}
(\sgn w) w(e^{\lambda + \rho}).  
\]    
Thus 
$f 
    = \sum_{\lambda\in \mathfrak{h}^*}^{\finite} a_{\lambda} \nabla( \lambda )$.  
\end{proof}  
\subsection{Translation functors and $DS_n$} 
We will need the following statement, 
 see  
\cite[Cor.~3.0.2]{ES-KW-perip}.   
\begin{lemma} 
\label{commute}  
The functor 
$DS_n$ 
commutes with   translations functors 
$\Theta'_k$.  
  \end{lemma}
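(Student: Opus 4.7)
The plan is to exploit two facts: $DS_n$ is a symmetric monoidal functor, and $\Theta'_k$ is defined as a projection onto a generalized eigenspace of the fake Casimir $\Omega$ acting on $-\otimes V$. First, monoidality gives a natural isomorphism
\[
DS_n(M \otimes V) \cong DS_n(M) \otimes DS_n(V)
\]
of $\mathfrak{p}(n-2)$-modules. Choosing $x = x_\beta$ with $\beta = -\varepsilon_{n-1} - \varepsilon_n$, a direct computation of $\ker_V x / xV$ identifies $DS_n(V)$ with the natural $(n-2|n-2)$-dimensional $\mathfrak{p}(n-2)$-module $V'$. Thus we obtain a canonical isomorphism $DS_n(M \otimes V) \cong DS_n(M) \otimes V'$, and it remains to compare the two fake Casimirs under this identification.

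The central step is to show that under the above isomorphism, the action of $\Omega$ on $M \otimes V$ descends to the action of $\Omega'$, the fake Casimir for $\mathfrak{p}(n-2)$, on $DS_n(M) \otimes V'$. For this, I would choose $\mathbb{Z}$-homogeneous dual bases $\{X_i\}$ of $\mathfrak{p}(n)$ and $\{X^i\}$ of $\mathfrak{p}(n)^{\perp}$ refined according to the adjoint $x$-action: one group consists of vectors in $\mathfrak{g}^x = \{Y : [x,Y]=0\}$ together with duals in the corresponding subspace of $\mathfrak{p}(n)^\perp$, while the remaining vectors come in $\op{ad}(x)$-paired orbits. The first group descends under $x$-cohomology to dual bases of $\mathfrak{p}(n-2)$ and $\mathfrak{p}(n-2)^{\perp}$, producing exactly $\Omega'$. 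For the pairs $(Y,Z)$ with, say, $Y = [x,Y']$, invariance of the form $\operatorname{str}(XY)$ under $\op{ad}(x)$ forces the contribution $2\,[x,Y']\otimes Z$ to act on $M \otimes V$ by an operator of the form $[x,\,\cdot\,]$ on the first or second tensor factor, which vanishes on $\ker x / x \cdot (M\otimes V)$ because the tensor product $x$-cohomology is naturally isomorphic to the tensor product of the factor cohomologies.

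Once this Casimir compatibility is established, the generalized $k$-eigenspace decomposition of $\Omega$ on $M\otimes V$ descends termwise to the generalized $k$-eigenspace decomposition of $\Omega'$ on $DS_n(M)\otimes V'$, so the projection defining $\Theta'_k$ commutes with $DS_n$. The main obstacle is the middle step: the bookkeeping required to show that the ``mixed'' terms in $\Omega$ — those coming from basis elements not lying in $\mathfrak{g}^x$ — produce operators that act as zero on $DS_n$ of the tensor product. This reduces to a careful but direct calculation using $[x,x] = 0$, the invariance of the pairing $\langle \cdot, \cdot\rangle$, and the K\"unneth-type identification of $x$-cohomology on tensor products, with no genuinely new ideas beyond those already employed in the construction of $\Omega$ in Section~\ref{subsection:Translation-functors}.
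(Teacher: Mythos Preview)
The paper does not give a proof of this lemma; it simply cites \cite[Cor.~3.0.2]{ES-KW-perip}.  Your direct argument---using that $DS_n$ is symmetric monoidal, identifying $DS_n(V)$ with the natural $\mathfrak{p}(n-2)$-module, and then showing that $\Omega$ descends to $\Omega'$---is the expected strategy and is presumably close to what the cited reference does.

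One caution about the middle step: your handling of the ``mixed'' basis terms is imprecise as written.  If $Y=[x,Y']$ then on a representative $m\otimes v$ with $m,v\in\ker x$ the term $Y\otimes Z$ produces $\pm\, xY'm\otimes Zv$, which lies in $x\cdot(M\otimes V)$ only modulo the cross-term $\mp\, Y'm\otimes xZv$; this does not vanish on the nose, so the phrase ``acts as $[x,\cdot]$ on one factor'' is not literally correct.  A cleaner packaging is to note that $\Omega\in\mathfrak{p}(n)\otimes\mathfrak{p}(n)^{\perp}$ is itself $\mathfrak{p}(n)$-invariant, hence $\op{ad}(x)$-closed, so it has a well-defined image in $DS_x(\mathfrak{p}(n))\otimes DS_x(\mathfrak{p}(n)^{\perp})\cong\mathfrak{p}(n-2)\otimes\mathfrak{p}(n-2)^{\perp}$; since both this image and $\Omega'$ correspond to the identity of $\mathfrak{p}(n-2)$ under the induced pairing, they agree, and compatibility with the action on $DS_n(M)\otimes V'$ then follows from naturality of the monoidal isomorphism.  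This avoids chasing the cross-terms by hand and makes the argument watertight.
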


\begin{corollary} 
\label{lemma:translation-functor-image} 
If 
$[M]\in \Im ds_{n}$, then 
$[\Theta_i(M)]\in \Im ds_{n}$ for every translation functor $\Theta_i$. 
\end{corollary}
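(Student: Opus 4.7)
The plan is to deduce the corollary directly from Lemma~\ref{commute}, by pushing the commutativity of $DS_n$ with $\Theta'_k$ through the identification \eqref{eqn:induced-ds-map} of $ds_n$ with the map induced on supercharacters. Since the translation functors $\Theta_k$ are exact, $[\Theta_k(M)]$ is well-defined on classes in $J(P(n-2))$, so this interpretation is unambiguous.

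First I would unpack the hypothesis: $[M]\in\Im ds_n$ means that there is a module $N\in\mathcal{F}_n$ with $[DS_n(N)]=[M]$ in $J(P(n-2))$. Next I would invoke Lemma~\ref{commute}, which provides a natural isomorphism $DS_n\circ\Theta'_k\simeq \Theta'_k\circ DS_n$, where the $\Theta'_k$ on the target is the analogous endofunctor of $\mathcal{F}_{n-2}$, built from the natural representation of $\mathfrak{p}(n-2)$ and the associated fake Casimir. Passing to supercharacter classes yields
\[
ds_n\bigl([\Theta'_k N]\bigr)
= [DS_n(\Theta'_k N)]
= [\Theta'_k DS_n(N)]
= [\Theta'_k M],
\]
so $[\Theta'_k(M)]\in \Im ds_n$.

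Finally, since $\Theta_k$ is obtained from $\Theta'_k$ by an (iterated) parity shift, and since in the reduced ring $J(P(n-2))$ one has $[\Pi L]=-[L]$, I would conclude that $[\Theta_k(M)]=\pm [\Theta'_k(M)]$, hence also lies in $\Im ds_n$.

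I do not expect any substantive obstacle here: essentially all the work is already packaged in Lemma~\ref{commute}, whose content is the nontrivial fact that $DS_n$ intertwines the spectral decompositions with respect to the two fake Casimirs (on $\mathcal{F}_n$ and on $\mathcal{F}_{n-2}$). The present corollary is merely the Grothendieck-ring shadow of that categorical statement, and its role in what follows is to let us transport classes constructed in $\mathcal{F}_n$ via parabolic induction and translation into preimages of $ds_n$ in the inductive surjectivity argument.
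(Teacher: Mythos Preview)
Your proof is correct and follows essentially the same route as the paper's: pick a preimage in $\mathcal{F}_n$, apply Lemma~\ref{commute} to commute $DS_n$ past the translation functor, and read off the conclusion at the level of classes. If anything, you are slightly more careful than the paper in two respects: you explicitly note that exactness of $\Theta_k$ is what makes $[\Theta_k(M)]$ depend only on $[M]$ (so that it suffices to have $[DS_n(N)]=[M]$ rather than $DS_n(N)=M$), and you track the passage from $\Theta'_k$ to $\Theta_k$ via the parity shift, which the paper glosses over.
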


\begin{proof}
Suppose that
$DS_n(A) = M$ 
for some finite-dimensional 
$P(n)$-module 
$A$. 
By Lemma~\ref{commute}, 
we have 
$ds_n([\Theta_i(A)]) 
= [DS_{n}(\Theta_i(A))] 
= [\Theta_i (DS_n (A))]
= [\Theta_i(M)]$.
\end{proof}

\section{Surjectivity of the Duflo-Serganova homomorphism}
\label{section:main-thm-proof} 
As explained in Proposition \ref{lemma:kernel-DS-Pn}, the kernel of $ds_n$ is well-understood. To prove Theorem~\ref{thm:reduced-GR-periplectic}, we need to understand the image of $ds_n$ as well:
\black

\begin{theorem}\label{thm:ds-map-surj}
The map  $ds_n:J(P(n))\rightarrow J(P(n-2))$  is surjective.   
\end{theorem}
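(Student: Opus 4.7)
The goal is to construct, for each $\mu \in \Lambda_{n-2}$, an element of $J(P(n))$ whose image under $ds_n$ is $\sch \nabla_{P(n-2)}(\mu)$; this suffices since the thin Kac supercharacters span $J(P(n-2))$ (by unitriangularity between the thin Kac basis and the basis of simples). By Lemma~\ref{commute} together with Corollary~\ref{lemma:translation-functor-image}, the image of $ds_n$ is stable under the translation functors $\Theta_k$ acting on $\mathcal{F}_{n-2}$. Proposition~\ref{smallKac} describes this action on thin Kac modules as sliding a black ball in the weight diagram, possibly with a two-term short exact sequence (Case~(3)). Since every $\mu \in \Lambda_{n-2}$ has exactly $n-2$ black balls in $d_\mu$, any such weight can be reached from a single base weight $\mu_0$ by a sequence of such slides, with any Case~(3) ambiguity resolved by a triangularity argument in the standard ordering of weights. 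Hence it suffices to produce a preimage for one conveniently chosen $\sch \nabla_{P(n-2)}(\mu_0)$ (together, if needed, with one representative of each parity class).

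The construction of the base preimage uses the Euler characteristic $\mathcal{E}(\tilde\lambda)$ of a parabolic induction, as reviewed in Section~\ref{parabolic induction}. The parabolic $\mathfrak{q} \subseteq \mathfrak{p}(n)$ must be chosen so that the factor $(1-e^{-\alpha})$ for $\alpha = -(\varepsilon_{n-1}+\varepsilon_n)$ does \emph{not} appear in $\prod_{\alpha \in \Delta_1(\mathfrak{r})}(1-e^{-\alpha})$; otherwise the specialization $x_{n-1}=x_n^{-1}=t$ kills the expression, as is the case for thin Kac modules. A natural choice is $\gamma = \varepsilon_{n-1}-\varepsilon_n$, whose Levi $\mathfrak{k}$ contains $\mathfrak{p}(n-2)$ together with the odd root spaces $\mathfrak{g}_{\pm(\varepsilon_{n-1}+\varepsilon_n)}$. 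For a weight of the form $\tilde\lambda = (\mu_0, a, b)$ with $a, b$ chosen appropriately so that $\tilde\lambda$ is dominant, evaluating $\mathcal{E}(\tilde\lambda)\big|_{x_{n-1}=x_n^{-1}=t}$ via the formula of Section~\ref{parabolic induction} and the Weyl denominator identity for $\mathfrak{p}(n-2)$ should yield $\sch \nabla_{P(n-2)}(\mu_0)$ up to a term in $\ker ds_n$. Such a kernel term is by Proposition~\ref{lemma:kernel-DS-Pn} a sum of thin Kac characters of $P(n)$ and hence may be safely discarded.

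The hard part is the explicit specialization: the $S_n$-symmetrization in $\mathcal{E}(\tilde\lambda)$ must be reorganized along cosets of $S_{n-2}\times S_2$, and delicate cancellations are needed to verify that the specialized expression is $t$-independent and agrees with the target supercharacter. The parity signs $(-1)^{p(\lambda)}$ appearing in Lemma~\ref{lem:superchar-thin-Kac} require careful bookkeeping, and the choice of $(a,b)$ in $\tilde\lambda$ may need to be varied to realize both parity classes. Once these base preimages are identified, the translation-functor propagation described in the first paragraph delivers preimages for every remaining $[\nabla_{P(n-2)}(\mu)]$, completing the proof.
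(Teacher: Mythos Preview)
Your overall strategy---use parabolic induction to produce a preimage of a base thin Kac module and then propagate via translation functors---matches the paper's Propositions~\ref{prop:all-thin-Kac-image} and~\ref{nabla(0) is in the image}. The choice of parabolic differs (the paper takes $\gamma=-\sum_{l=2k+1}^n\varepsilon_l$ and $\lambda=a(\varepsilon_1+\dots+\varepsilon_{2k})$ rather than your $\gamma=\varepsilon_{n-1}-\varepsilon_n$), but that is a matter of computation.

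The genuine gap is your opening claim that ``thin Kac supercharacters span $J(P(n-2))$ by unitriangularity.'' This is false. By Lemma~\ref{lem:superchar-thin-Kac} every $\sch\nabla(\lambda)$ is divisible by $\mathcal R_{-1}=\prod_{i<j}(1-x_ix_j)$, so the $\mathbb Z$-span of thin Kac supercharacters lies inside $\mathcal R_{-1}\cdot\mathbb Z[x_1^{\pm1},\dots,x_{n-2}^{\pm1}]^{S_{n-2}}$. But the trivial module has supercharacter $\pm 1$, which for $n-2\ge 2$ is not in that ideal. The unitriangular relation $[\nabla(\lambda)]=[L(\lambda)]+\text{(lower)}$ does hold, but its inverse is not a \emph{finite} sum, so it does not give a spanning set.

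This is precisely why the paper needs Proposition~\ref{prop:surjectivity} and why Propositions~\ref{prop:all-thin-Kac-image} and~\ref{nabla(0) is in the image} are stated for the iterated map $ds_n^{(k)}$ rather than just $ds_n$. The paper filters $J(P(n-2))$ by the kernels $\ker ds_{n-2}^{(k)}$; each successive quotient is, via Proposition~\ref{lemma:kernel-DS-Pn}, controlled by thin Kac supercharacters at a lower rank, and an induction on $n$ (using that thin Kacs of $P(n-2k)$ lie in $\Im ds_{n-2k+2}$) shows each graded piece lies in $\Im ds_n$. Without this filtration-and-induction step your argument stops short of surjectivity: you have shown only that $\ker ds_{n-2}\subseteq\Im ds_n$, not that all of $J(P(n-2))$ is hit.
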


We prove Theorem~\ref{thm:ds-map-surj} in three steps.  
We first show that if $[\nabla(0)]$ is in the image of $ds_n^{(k)}$ for some $k\geq 0$  
(see \eqref{eqn:dsnk} for the definition), then $[\nabla(\mu)]$ is also in the image of $ds_n^{(k)}$   
for every $\mu$.   
We then show that $[\nabla(0)]$ is in the image of $ds_n$ by explicitly constructing its preimage.   
Finally, we show that the fact that $[\nabla(\mu)]$ is in the image of $ds_n^{(k)}$ for every $\mu$ implies that the map is surjective.   

The three steps are given in the following propositions. First we show:
\begin{proposition}
\label{prop:all-thin-Kac-image}
If $[\nabla(0)]\in \Im ds_n^{(k)}$ for some $k \geq 0$, then $[\nabla(\mu)]\in \Im ds_n^{(k)}$ for all $\mu\in \Lambda_{n}$. 
\end{proposition}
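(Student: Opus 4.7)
My strategy is to first reduce to a combinatorial claim and then induct on a suitable distance measure. By iterating Corollary~\ref{lemma:translation-functor-image} across the composition $ds_n^{(k)} = ds_{n-2k+2}\circ\cdots\circ ds_n$, the image $\Im ds_n^{(k)} \subseteq J(P(n{-}2k))$ is a subring closed under every translation functor $\Theta'_j$ on $\mathcal F_{n-2k}$. It therefore suffices to show that the subgroup $S \subseteq J(P(n{-}2k))$ generated by $[\nabla(0)]$ and stable under all $\Theta'_j$ contains $[\nabla(\mu)]$ for every dominant $\mu$.

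I reformulate Proposition~\ref{smallKac} in the reduced ring $J$, using $[\Pi M] = -[M]$: applied to a thin Kac class $[\nabla(\lambda)]$, the translation $\Theta'_j$ yields exactly one of (a) $[\nabla(\lambda^\rightarrow)]$, (b) $-[\nabla(\lambda^\leftarrow)]$, (c) $[\nabla(\lambda^\rightarrow)] - [\nabla(\lambda^\leftarrow)]$, or (d) $0$, where $\lambda^\rightarrow$ (resp.\ $\lambda^\leftarrow$) is the diagram obtained from $d_\lambda$ by shifting the ball at position $j$ to $j{+}1$ (resp.\ $j{-}1$). Which case occurs is determined by the occupancy of $j{\pm}1$ in $d_\lambda$.

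I then induct on the transport distance $N(\mu) := \sum_i |p_i(\mu) - (i-1)|$, where $p_1(\mu) < \cdots < p_{n-2k}(\mu)$ list the ball positions of $d_\mu$. The base case $N(\mu) = 0$ forces $\mu = 0$, which is the hypothesis. For $N(\mu) \geq 1$, let the leftmost displaced ball be the $i$-th, at position $q = p_i(\mu)$; one checks $q \geq i$ and that positions $i{-}1,\ldots,q{-}1$ are all empty in $d_\mu$. Moving this ball one step left yields a dominant $\lambda$ with $N(\lambda) = N(\mu) - 1$, so $[\nabla(\lambda)] \in S$ by induction. Now apply $\Theta'_{q-1}$ to $[\nabla(\lambda)]$: when $q = i$ the neighborhood at $q{-}2,q{-}1,q$ is $\bullet\bullet\circ$ (case (a)), producing $[\nabla(\mu)]$ outright; when $q > i$ the neighborhood is $\circ\bullet\circ$ (case (c)), producing $[\nabla(\mu)] - [\nabla(\nu)]$ where $\nu$ shifts the $i$-th ball further to $q{-}2$, and a short computation gives $N(\nu) = N(\mu) - 2$. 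Either way, the inductive hypothesis places $[\nabla(\mu)]$ in $S$.

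The main obstacle is the apparent loss of control from case (c), which produces a \emph{sum} of two Kac classes rather than one. The resolution is the choice to move the displaced ball \emph{against} its target direction at each inductive step: this forces the alternative shift in case (c) to move toward the target, strictly decreasing $N$, so the residual Kac class is already known to lie in $S$. A side benefit of this choice is that cases (b) and (d) never arise, streamlining the analysis to the two cases treated above.
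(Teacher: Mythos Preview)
Your approach---use Corollary~\ref{lemma:translation-functor-image} to see that $\Im ds_n^{(k)}$ is stable under every $\Theta'_j$, then reach every $[\nabla(\mu)]$ from $[\nabla(0)]$ by an induction on a combinatorial distance---is exactly the strategy the paper sketches. The translation of Proposition~\ref{smallKac} into the four cases (a)--(d) in $J$ is correct, and your idea of moving the displaced ball \emph{against} its target so that the error term in case~(c) has strictly smaller invariant is the right mechanism.

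There is, however, a genuine gap in the inductive step. Your claim ``one checks $q\ge i$'' relies on $p_{i-1}(\mu)=i-2$ forcing $p_i(\mu)\ge i-1$; this is valid only for $i\ge 2$. When $i=1$ there is no ball to the left to constrain $p_1$, and for any $\mu$ with $\mu_{n-2k}<0$ the leftmost ball sits at $p_1(\mu)=\mu_{n-2k}<0$, strictly to the \emph{left} of its home position~$0$. In that situation moving it further left increases $N$, so your induction does not start. The same boundary issue bites when $i=1$ and $q=1$: in $d_\lambda$ the pattern at $q-2,q-1,q=(-1,0,1)$ is $\circ\bullet\circ$, not $\bullet\bullet\circ$ as you assert, and the resulting error term $\nu$ has its first ball at $-1$, giving $N(\nu)=N(\mu)$ rather than $N(\mu)-2$.

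The fix is painless: run the mirror-image argument on the \emph{rightmost} displaced ball whenever the leftmost one is displaced to the left. If balls $i{+}1,\ldots,m$ sit at their homes and ball $i$ sits at $q<i-1$, move it one step \emph{right} to form $\lambda$ and apply $\Theta'_{q+1}$; the local pattern is $\circ\bullet\bullet$ (case~(b)) when $q=i-2$ and $\circ\bullet\circ$ (case~(c)) when $q<i-2$, and in the latter case the error term has $N$ decreased by~$2$ exactly as before. Alternatively---and this is what the paper's phrase ``distance between the symbols~$\bullet$'' suggests---first reach every shifted weight $(a,\ldots,a)$ from $0$ by iterated case~(a) or case~(b) moves, and then induct on an intrinsic quantity such as $\sum_j(p_{j+1}(\mu)-p_j(\mu)-1)$, which does not reference a fixed home position and so treats positive and negative entries symmetrically.
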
 
This proposition is proved using the action of translation functors and Corollary \ref{lemma:translation-functor-image}. We make an inductive argument on the distance between the symbols $\bullet$ in the weight diagrams. Thus, we obtain that $[\nabla(\mu)]$ is in the image of for all $\mu\in \Lambda_{n}$.

Next we prove that:
 \begin{proposition}\label{nabla(0) is in the image}
One has $[\nabla(0)]\in \Im ds_{n}^{(k)}$. 
\end{proposition}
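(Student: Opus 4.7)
The plan is to construct an explicit preimage of $[\nabla(0)]_{P(n-2)}$ under $ds_n$, which establishes the proposition for $k=1$ and suffices to invoke Proposition~\ref{prop:all-thin-Kac-image} in the surjectivity argument. Following the strategy mentioned in the introduction, the preimage will be built from an Euler characteristic of a parabolic induction as in Section~\ref{parabolic induction}, adjusted if necessary by translation functors.

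First I would choose a parabolic subalgebra $\mathfrak{q} \subset \mathfrak{p}(n)$ whose Levi is $\mathfrak{p}(n-2) \oplus \mathfrak{gl}(2)$, with the $\mathfrak{p}(n-2)$-factor supported on the first $n-2$ indices; this corresponds to $\gamma = \varepsilon_{n-1} + \varepsilon_n$, for which $\Delta_1(\mathfrak{r})$ consists of the $\mathfrak{g}_1$-roots touching at least one of $n-1, n$, namely $\{2\varepsilon_{n-1}, \varepsilon_{n-1}+\varepsilon_n, 2\varepsilon_n\} \cup \{\varepsilon_i + \varepsilon_j : i\leq n-2,\ j\in\{n-1,n\}\}$. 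For a suitable weight $\lambda$, the formula from Section~\ref{parabolic induction} gives $\mathcal{E}(\lambda) \in J(P(n))$, and the image $ds_n(\mathcal{E}(\lambda)) \in J(P(n-2))$ is automatically independent of the evaluation parameter $t = x_{n-1} = x_n^{-1}$ by the $J_n$-condition, so one may set $t=1$ (or any convenient value) after evaluation.

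I would then compute $ds_n(\mathcal{E}(\lambda))$ explicitly by substituting $x_{n-1}=t$, $x_n=t^{-1}$ in the Weyl-sum expression and grouping the terms according to cosets of $S_{n-2}\times S_2 \subset S_n$. Only those cosets for which the permutation $w$ sends $\{w^{-1}(n-1), w^{-1}(n)\}$ to a pair disjoint from the supports of the root-pairs in $\Delta_1(\mathfrak{r})$ contribute nontrivially; on each such coset, the internal $S_2$-antisymmetrization combines with the $\mathfrak{gl}(2)$-factor of $\mathcal{R}_0$ in the denominator to cancel the $t$-dependence, leaving an $S_{n-2}$-symmetric Laurent polynomial in $x_1,\ldots,x_{n-2}$. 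Any discrepancy between this polynomial and $\sch\nabla(0)_{P(n-2)} = \prod_{1\leq i<j\leq n-2}(1-x_ix_j)$ lies either in $\ker ds_n$, which by Proposition~\ref{lemma:kernel-DS-Pn} is spanned by thin Kac supercharacters and hence is harmless for the image statement, or in $\Im ds_n$, where it can be corrected by subtracting translates $[\Theta_i F]$ (permitted by Corollary~\ref{lemma:translation-functor-image}) or Euler characteristics $\mathcal{E}(\lambda')$ for other choices of $\lambda'$.

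The main obstacle is the explicit Weyl-sum bookkeeping: many factors $(1-e^{-w\alpha})$ vanish simultaneously on the subvariety $x_{n-1}x_n = 1$, so one must carefully identify which permutations contribute nontrivially and track the $t$-dependence through the Weyl-denominator cancellation. The crucial simplifying feature is the compatibility of the evaluation with the Levi splitting $\mathfrak{gl}(n-2) \oplus \mathfrak{gl}(2)$ of the even part, which allows the $\mathfrak{gl}(2)$-denominator to absorb the $t$-dependent factors from the surviving numerator terms cleanly, after which comparison with the Kac character formula of Lemma~\ref{lem:superchar-thin-Kac} can be carried out term by term.
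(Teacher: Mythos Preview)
Your plan diverges from the paper's in a technical choice that is decisive for the computation. The paper takes the parabolic associated with $\gamma = -\sum_{l=2k+1}^n \varepsilon_l$, whose Levi is $\mathfrak{p}(2k)\times\mathfrak{gl}(n-2k)$ (the $\mathfrak{p}$-factor on the \emph{first} $2k$ indices), and $\lambda = a(\varepsilon_1+\cdots+\varepsilon_{2k})$. With this choice $\Delta_1(\mathfrak{r})$ lies entirely in $\Delta(\mathfrak{g}_{-1})$, so the numerator product is $\prod_{i<j,\,j>2k}(1-x_ix_j)$, i.e.\ $\mathcal{R}_{-1}$ with only the pairs inside $\{1,\dots,2k\}$ omitted. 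After applying $ds_n^{(k)}$ (equivalently, by $S_n$-invariance, setting $x_1=x_2^{-1},\dots,x_{2k-1}=x_{2k}^{-1}$), the $w$-term survives exactly when $w\in S_{2k}\times S_{n-2k}$; the factor $e^{\lambda}$ becomes $1$ on this locus regardless of $a$, and the remaining product collapses directly to $\mathcal{R}_{-1}$ for $P(n-2k)$, giving $\sch\nabla(0)$ on the nose.

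Your choice $\gamma = \varepsilon_{n-1}+\varepsilon_n$ puts $\Delta_1(\mathfrak{r})$ inside $\Delta(\mathfrak{g}_1)$, so the numerator carries factors $(1-x_i^{-1}x_j^{-1})$ rather than the $(1-x_ix_j)$ that constitute $\sch\nabla(0)$. More seriously, your description of the surviving Weyl terms is not right: the phrase ``$w$ sends $\{w^{-1}(n-1),w^{-1}(n)\}$ to a pair disjoint from the supports'' is vacuous, since that set is always sent to $\{n-1,n\}$ and the supports of the roots in $\Delta_1(\mathfrak{r})$ exhaust $\{1,\dots,n\}$. The actual survival condition is $w^{-1}(\{n-1,n\})\cap\{n-1,n\}=\emptyset$, and these $w$ form neither a subgroup nor a single $S_{n-2}\times S_2$-coset, so the clean $\mathfrak{gl}(2)$-denominator cancellation you invoke is not available. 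You also never specify $\lambda$, and ``any discrepancy can be corrected by translates or other $\mathcal{E}(\lambda')$'' is a hope, not an argument. Finally, the proposition is stated for arbitrary $k$, which the paper's parabolic handles uniformly; your restriction to $k=1$ may be adequate for the global induction but does not prove the proposition as written.
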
 
 The preimage of $[\nabla(0)]$ is constructed using parabolic induction. We choose the parabolic subalgebra $\mathfrak{q}$ associated with $\gamma=-\sum_{l=2k+1}^n\varepsilon_l$ and take $\lambda = a(\varepsilon_1+ \ldots + \varepsilon_{2k})$ for $a\in \mathbb Z$ (see Section \ref{parabolic induction}). Then 
 \[ 
\mathcal{E}(\lambda) = 
\sum_{w\in S_n} w\left(
e^{\lambda}
\frac{\prod_{1\leq i\leq n, 2k < j \leq n, i< j} (1-x_i x_j)}{
\prod_{1\leq i<j\leq n} (1-x_i^{-1}x_j)
}
\right). 
\] 
and it is possible to show that it evaluates to $\sch \nabla(0)$ under the map $ds_n^k$.
 
Finally, we show tha:t
 \begin{proposition}
 \label{prop:surjectivity}
 If $\Span\{ [\nabla(\lambda)]: \lambda\in \Lambda_{n-2} \} \subseteq \Im ds_{n}$, then $J(P(n-2))\subseteq \Im ds_{n}$. 
 \end{proposition}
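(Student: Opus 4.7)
The plan is to prove the stronger statement that $ds_n^{(k)}:J(P(n))\to J(P(n-2k))$ is surjective for every $k\ge 1$ by \emph{downward induction on $k$}; the proposition is then the case $k=1$. The input I reuse at every level, obtained by combining Propositions~\ref{prop:all-thin-Kac-image} and~\ref{nabla(0) is in the image}, is that for every $k\ge 0$ and every $\mu\in\Lambda_{n-2k}$ the class $[\nabla(\mu)]$ lies in $\Im ds_n^{(k)}$.

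For the base of the induction I would take $k_0:=\lfloor n/2\rfloor$, so that $n-2k_0\in\{0,1\}$. A direct inspection via Lemma~\ref{lem:superchar-thin-Kac} shows that in both cases $J(P(n-2k_0))$ is already spanned by supercharacters of thin Kac modules: $J(P(0))=\mathbb{Z}$ is generated by $[\nabla(0)]=1$, while for $P(1)$ the Weyl group is trivial and $\mathcal{R}_0=\mathcal{R}_{-1}=1$, $\rho=0$, so $\sch\nabla(\lambda)=\pm e^{\lambda}$ and these span $\mathbb{Z}[x_1^{\pm 1}]$. Combined with the input above, this gives surjectivity of $ds_n^{(k_0)}$.

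For the inductive step, I assume $ds_n^{(k+1)}$ is surjective and take $f\in J(P(n-2k))$. Using the factorization $ds_n^{(k+1)}=ds_{n-2k}\circ ds_n^{(k)}$ together with the inductive hypothesis, I pick $F\in J(P(n))$ with $ds_n^{(k+1)}(F)=ds_{n-2k}(f)$. Then $f-ds_n^{(k)}(F)\in\ker ds_{n-2k}$, which by Proposition~\ref{lemma:kernel-DS-Pn} is spanned by $\{[\nabla(\mu)]:\mu\in\Lambda_{n-2k}\}$; by the input above, this difference equals $ds_n^{(k)}(F')$ for some $F'\in J(P(n))$. Therefore $f=ds_n^{(k)}(F+F')\in\Im ds_n^{(k)}$, as required.

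The main bookkeeping subtlety I anticipate is that the argument uses the stronger statement ``$[\nabla(\mu)]\in\Im ds_n^{(k)}$ for every $k$'' supplied by Propositions~\ref{prop:all-thin-Kac-image} and~\ref{nabla(0) is in the image}, whereas the hypothesis of Proposition~\ref{prop:surjectivity} as literally stated only concerns $k=1$; once that point is recorded, each step of the induction is a routine ``kernel in image'' diagram chase built from Proposition~\ref{lemma:kernel-DS-Pn}.
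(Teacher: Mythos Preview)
Your proposal is correct and is essentially the same argument as the paper's. Both exploit the filtration of the target ring by the kernels of the iterated maps $ds^{(k)}$, identify each successive layer with thin Kac supercharacters via Proposition~\ref{lemma:kernel-DS-Pn}, and then lift those using Propositions~\ref{prop:all-thin-Kac-image} and~\ref{nabla(0) is in the image}; you organize this as a downward induction on $k$ proving surjectivity of every $ds_n^{(k)}$, while the paper phrases the same idea as working up the associated graded together with an induction on $n$. Your remark that the argument really consumes the input ``$[\nabla(\mu)]\in\Im ds_n^{(k)}$ for all $k$'' rather than only the $k=1$ hypothesis literally stated in the proposition is accurate and matches the paper, which likewise invokes Propositions~\ref{prop:all-thin-Kac-image} and~\ref{nabla(0) is in the image} at every level.
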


This is proved using an inductive argument on the associated graded algebra of $J(P(n-2))$ with respect to the filtration 
\begin{equation*}
0 = \ker ds_n^{(0)} \subseteq \ker ds_n^{(1)} \subseteq \ker ds_n^{(2)} \subseteq \ldots \subseteq \ker ds_n^{\left( \lfloor \frac{n}{2}\rfloor\right) } = J(P(n))  .
\end{equation*} 
We show that  $\ker ds_n^{(k)}/\ker ds_n^{(k-1)}$ is in the image of $ds_n$ by induction on $n$, using the fact that  $\ker ds_{n-2k}$ is in the image of  $ ds_{n-2k+2}$.  This follows since $\ker ds_{n-2k}$ is spanned by supercharacters of thin Kac modules by Proposition~\ref{lemma:kernel-DS-Pn} and supercharacters of thin Kac modules are in the image by   Proposition \ref{prop:surjectivity} and Proposition \ref{nabla(0) is in the image}. 

\subsection*{Proof of Theorem~\ref{thm:reduced-GR-periplectic}}\label{section:Groth-ring-superalgebra}
\label{subsection:main-result-proof}

By Lemma~\ref{lemma:Pn-superchar-containment}, 
we have that  $J(P(n))\subseteq  J_n$.   
Let us show the reverse inclusion. Suppose by induction that   $J(P(n-2))=J_{n-2}$.
By Theorem~\ref{thm:ds-map-surj}, 
the evaluation map 
\[ 
ds_n: J_n  
\rightarrow J(P(n-2))
\]  
given by $ds_n(f)=f|_{x_{n-1}=x_n^{-1}=t}$ is surjective when restricted to $J(P(n))$. 
Thus, every element of $J_n$  
is a sum of elements from $J(P(n))$ and $\ker {ds}_n$. 
By Proposition~\ref{lemma:kernel-DS-Pn},  $\ker {ds}_n\subseteq J(P(n))$ and   the claim follows. 
\qed

\bibliography{branching-diag}   
\end{document}